\newtheorem{theorem}{Theorem}[section]
\newtheorem{proposition}[theorem]{Proposition}
\newtheorem{lemma}[theorem]{Lemma}
\newtheorem{remark}{Remark}[section]
\newtheorem{theo}{Theorem}
\theoremstyle{definition}
\newcommand{\R}{\mathbb{R}}
\newcommand{\Z}{\mathbb{Z}}
\newcommand{\T}{\mathbb{T}}
\newcommand{\reff}[1]{(\ref{#1})}
\renewcommand{\Re}{\mbox{Re}}
\newcommand{\D}{\varmathbb D}
\begin{document}

\title
[Random dispersive waves]
{On the propagation of weakly nonlinear random dispersive waves}
\author{Anne-Sophie de Suzzoni and Nikolay Tzvetkov}
\address{Universit\'e de Cergy-Pontoise,  Cergy-Pontoise, F-95000,UMR 8088 du CNRS }
\thanks{The authors are supported the ERC grant Dispeq}
\begin{abstract}
We study several basic dispersive models with random periodic initial data such that the different Fourier modes are independent random variables.
Motivated by the vast Physics literature on related topics, we then study how much the Fourier modes of the solution at later times remain decorrelated.
Our results are sensitive to the resonances associated with the dispersive relation and to the particular choice of the initial data.  
\end{abstract}
\maketitle
\section{Introduction}
In this paper we study several basic dispersive models with random periodic initial data such that the different Fourier modes are independent random variables.
Motivated by the vast Physics literature on related topics (see e.g. \cite{Z}), we then study how much the Fourier modes of the solution at later times remain decorrelated, and how much the mean values of the amplitudes to the square of the Fourier modes vary with time.
Our results are sensitive to the resonances associated with the dispersive relation and to the particular choice of the initial data.   

All the models we will be interested in can be injected in the following general framework. Consider the equation
\begin{equation}\label{eqgen} 
( \partial_t  +L ) u + \varepsilon J (u^2) = 0, 
\end{equation}
posed on the torus $\T^d$ of dimension $d$ with an initial datum being a random variable that shall be described later. 
In \eqref{eqgen}, $\varepsilon\ll 1$ since we want to investigate about the effect of a weak non linearity 
over the behaviour of the statistics related to the random initial datum.
We suppose that $u$ is real valued and $L$ and $J$ are linear maps which are defined as Fourier multipliers by
$$
\widehat{Lu}(n)=-i\omega(n)\, \hat{u}(n),\quad
\widehat{Ju}(n)=i\varphi(n)\, \hat{u}(n),\quad \forall\,n\in\Z^d,
$$
where $\widehat{\cdot}$ denotes the Fourier transform on $\T^d$ and $\omega,\varphi :\Z^d\mapsto \R$ are supposed to be such that 
\begin{equation}\label{Hmean}
\omega(0,n')=\varphi(0,n')=0,\quad \forall\, n'\in\Z^{d-1},
\end{equation}
with the natural convention in the case $d=1$. 
We suppose that the variable on $\T^d$ is given by $x=(x_1,\hdots ,x_d)$.
Then, thanks to the assumption \eqref{Hmean}, we obtain that we can consider solutions of \eqref{eqgen} such that 
$\int_{\T} u(t,x) dx_1 = 0$.
We also suppose that $\omega,\varphi$ are odd functions. 
Observe that under the last assumption $L$ and $J$ send real valued functions to real valued functions. 
Set 
$$
\D^d = \lbrace n = (n_1,\hdots ,n_d) \in \Z^d \; |\; n_1 \neq 0\rbrace.
$$
For $s\in \R$, we introduce the Sobolev spaces $H^s$ of real functions having zero $x_1$ mean value : 
$$
H^s = \lbrace u(x) = \sum_{n\in \Z^d} e^{in\cdot x}u_n\; |\;u_n=\overline{u_{-n}} ,\quad \int_{\T} u(x) dx_1 = 0,\quad
 \sum_{n\in \D^d} |n|^{2s} |u_n|^2 < \infty\rbrace
$$
where $|n| = \sum_{j} |n_j|$. In this work we shall always make use of these Sobolev spaces $H^s$ since they are the ones adapted to our models.
In all our examples the equation \eqref{eqgen} is globally well-posed in some $H^s$ and thus there will be no difficulty caused by the problem of the existence of the dynamics.

Let us describe the dispersive models which can be written under the form  \reff{eqgen} we will be interested in.
They all appear in the modeling of long, small amplitude dispersive waves with a possible weak transverse perturbation.
The first example is the KdV equation
$$
\partial_t u+\partial_x^3u+\partial_{x}(u^2)=0
$$
which corresponds to \eqref{eqgen} in the case $d=1$ with $\omega(n)=n^3$ and $\varphi(n)=n$ (with the convention  $x=x_1$ and $n=n_1$ is the case $d=1$).
The KdV is globally well posed in $H^s$, $s\geq -1$ (see \cite{KT}, for earlier results we refer to \cite{Bo,CKSTT,KPV}).

A second example again in the case $d=1$ is an alternative of the KdV model, derived by Benjamin-Bona-Mahony (BBM equation) which can be written as
$$
\partial_t u+\partial_x u-\partial_t\partial_x^2u+\partial_{x}(u^2)=0.
$$
The BBM equation corresponds to \eqref{eqgen} with $-\omega(n)=\varphi(n)=n/(1+n^2)$. The BBM equation is globally well-posed in $H^s$, $s\geq 0$ 
(see \cite{Bona,D}).

Our two dimensional models will be the famous Kadomtsev-Petviashvili  (KP) equations. In fact there are two models according to the impact of the surface tension.
The first one is the KP-II equation which corresponds to a weak surface tension and can be written as  
$$
\partial_t u+\partial_{x_1}^3u+\partial_{x_1}^{-1}\partial_{x_2}^2 u+\partial_{x_1}(u^2)=0.
$$
The KP-II equation corresponds to \eqref{eqgen} in the case $d=2$ with 
$\omega(n_1,n_2)=n_1^3-n_2^2/n_1$ if $n_1\neq 0$, $\omega(0,n_2)=0$ and $\varphi(n_1,n_2)=n_1$.
The KP-II equation is globally well-posed in $H^s$, $s\geq 0$ (see \cite{Bo-KP}).

Finally, the KP-I equation 
$$
\partial_t u+\partial_{x_1}^3u-\partial_{x_1}^{-1}\partial_{x_2}^2 u+\partial_{x_1}(u^2)=0.
$$
corresponds to \eqref{eqgen} with 
$\omega(n_1,n_2)=n_1^3+n_2^2/n_1$ if $n_1\neq 0$, $\omega(0,n_2)=0$ and $\varphi(n_1,n_2)=n_1$.
The KP-I equation is globally well-posed if the data is in $H^s$, $s\geq 2$ (see \cite{IK} and also \cite{IKT}). 

Next, we describe the random initial data we shall deal with. With $\D^d_+ = \lbrace n \in \D^d \; |\; n_1>0 \rbrace$, 
let $(g_n)_{n\in \D^d_+}$ be a sequence of independent identically distributed complex random variables such that 
$$
E(g_n)=0,\quad  E(|g_n|^2) = 1
$$
and such that there exist to positive constants $c$ and $C$ such that for all $\gamma \in \R$,
\begin{equation}\label{sub-gauss}
E(e^{\gamma \textrm{Re} (g_n)})\leq C\,e^{c\gamma^2},\quad E(e^{\gamma \textrm{Im} (g_n)})\leq C\, e^{c\gamma^2},
\end{equation}
where $E$ is the expectation. We also suppose that the distribution of $g_n$ is invariant under the multiplication by $e^{i\theta}$ with $12\theta  \neq 0 [2\pi]$.
Note that under these assumptions, $E(g_n^2)$ is equal to $0$. Further consequences of this property will be used in the sequel.
\begin{remark}
{\rm 
A typical example of random variables satisfying our assumptions are the (complex) Gaussian random variables, i.e.
$g_n=\frac{1}{\sqrt{2}}(h_n+il_n)$, with $h_n,l_n\in {\mathcal N}(0,1)$. 
Another example coming from the Physics literature is what is known as random phase approximation, that is, $g_n$ is written $g_n = \chi_n A_n$, where $\chi_n$ is uniformly distributed on $S^1$ and $A_n$ is a non-negative random variable independent from $\chi_n$, and $E(A_n^2)=1$.
In all these examples the symmetry assumption on $g_n$ holds with any angle $\theta\neq 0$.
In order to ensure \eqref{sub-gauss}, we can suppose that the distribution $\mu$ of $A_n$ satisfies 
$$
\int_{0}^{\infty}e^{\gamma r}d\mu(r)\leq C e^{c\gamma^2}\,.
$$
For instance, the last property holds true if $\mu$ is compactly supported.
}
\end{remark}
Next, for $n\in \D^d_+$ set $g_{-n} = \overline{g_n}$. 
Let $\lambda=(\lambda_n)_{n\in \D^d_+}$ be a sequence of complex numbers such that
\begin{equation}\label{Hs}
\sum_{n\in \D^d_+} |n|^{2s} |\lambda_n|^2 < \infty
\end{equation}
for some $s$ depending on $L$ and $J$ such that the equation \eqref{eqgen} 
is globally well-posed in $H^s$. Set for all $n\in \D^d_+$, $\lambda_{-n} = \overline{\lambda_n}$.
Set 
\begin{equation}\label{data}
u_0(x) = \sum_{n\in \D^d} g_n \lambda_n e^{in\cdot x}
\end{equation}
Thanks to our assumption on $(\lambda_n)_{n\in \D^d_+}$, we have that $u_0\in H^s$ almost surely. Moreover, it is real valued. 
Let $u(\varepsilon;t, x)$ be the solution of 
$$
\left \lbrace{\begin{tabular}{ll}
$\left(\partial_t + L\right) u +\varepsilon J (u^2) =0$\\
$u_{|t=0} = u_0 .$\end{tabular}} 
\right. 
$$
Consider the expansion of  $u(\varepsilon ;t, x)$ as a Fourier series,
$$
u(\varepsilon;t, x)=\sum_{n\in \D^d} u_{n}(\varepsilon ; t) e^{in\cdot x}\,.
$$
Set $S(t) = e^{-tL}$. Then clearly
$u(0;t,x) = S(t)u_0$ and
$$
u_{n}(0;t) =e^{i\omega(n) t}g_n \lambda_n\,.
$$
In particular, thanks to our assumption on the random variables $g_n$, 
\begin{equation}\label{indep}
E(\overline{u_m(0;t)}u_n(0;t)) = \delta_n^m |\lambda_n |^2 \, ,\quad \forall\, t\in\R .
\end{equation}
Our aim is to expand the quantity $E(\overline{u_m(\varepsilon;t)}u_n(\varepsilon;t))$ in $\varepsilon$ and see how much \eqref{indep} survives in the
nonlinear setting. 

In order to state our result, we introduce the following notations.  We set
$$\Delta_{n}^{k,l} = \omega(k) + \omega(l) -\omega(n)$$
which corresponds to the pulsation associated to the three waves interaction $k+l \rightarrow n$ when $k+l = n$.
Next we set :
$$
F_n^{k,l}(t) = \int_{0}^t e^{i\Delta_n^{k,l}\tau}d\tau \; .
$$
Here is our main result.
\begin{theo}\label{result}
Consider \eqref{eqgen}, in the cases of the KP-II, BBM and the KP-I equations,  
with initial data given by \eqref{data} of typical Sobolev regularity $H^s$, $s>3/8$ for BBM, $s>2$ for $KP-II$ and $s>3$ for $KP-I$.
Then 
\begin{equation}\label{mea}
E(\overline{u_m(\varepsilon;t)}u_n(\varepsilon;t)) = 
\delta_n^m |\lambda_n|^2 + \delta_n^m\varepsilon^2 G_n(\lambda,t) + \varepsilon^3 R(\varepsilon;t,m,n)
\end{equation}
where $G_n(\lambda,t)$ is given by $G_n(\lambda,0) = 0$ and
\begin{eqnarray*}
\partial_t G_n(\lambda,t) & = & 4\varphi(n)  \sum_{k+l=n}  \Re (-F_n^{k,l}(-t)) 
\Big( \varphi(n) |\lambda_k |^2 |\lambda_l|^2 -\varphi(k) |\lambda_n|^2 |\lambda_l |^2 -\varphi(l) |\lambda_n|^2 |\lambda_k |^2 \Big)
\\
&  &
+(E(|g_n|^4)-2)
\Big(
2\delta_{n}^{2q} \Re (-F_n^{q,q}(-t))\varphi^2(n)|\lambda_q|^4- 4 \Re (-F_n^{2n,-n}(-t))\varphi(2n)\varphi(n)|\lambda_n|^4
\Big)
\end{eqnarray*}
and besides $G_n(\lambda, t)$ and $R(\varepsilon;t,m,n)$ satisfies the following estimates.
There exists $C>0$ such that for every $ \varepsilon \in (0,1]$, every $|t| \leq \frac{1}{C\varepsilon}$, every $m,n$,
$$
|G_n(\lambda,t)|\leq C  |n|^{-\beta(s)},\qquad 
|R(\varepsilon;t,m,n)|\leq C 
\min(|n|, |m|)^{-1}\,
|t|(1+|t|) 
$$
in the case of the BBM equation, with $\beta (s) = 2+2s$ if $s\geq 1$ and $\beta(s) = 4s$ otherwise,
$$
|G_n(\lambda,t)|\leq C |n|^{-2s},\qquad 
|R(\varepsilon;t,m,n)|\leq C \max (|n|,|m|) |t|(1+|t|) 
$$
in the case of the KP-II equation,  and
$$
|G_n(\lambda,t)|\leq Ct^2 |n|^{2-2s},\qquad 
|R(\varepsilon;t,m,n)|\leq C\max (|n|,|m|)|t|^3
$$
in the case of the KP-I equation.
\end{theo}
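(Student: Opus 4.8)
\medskip
\noindent\textbf{Proof strategy.}

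The plan is a perturbative (Picard) expansion of $u(\varepsilon;t)$ in powers of $\varepsilon$ combined with a mode-by-mode computation of the expectations. Iterating the Duhamel formula
\[
u(\varepsilon;t)=S(t)u_0-\varepsilon\int_0^t S(t-\tau)\,J\big(u(\varepsilon;\tau)^2\big)\,d\tau ,
\]
set $v_0=S(t)u_0$, $v_j=-\sum_{a+b=j-1}\int_0^t S(t-\tau)\,J\big(v_a(\tau)v_b(\tau)\big)\,d\tau$ for $j\ge1$, and write $u(\varepsilon;t)=v_0+\varepsilon v_1+\varepsilon^2 v_2+\varepsilon^3 w(\varepsilon;t)$, where $w$ solves the (globally well posed, at the stated regularity) equation inherited from \eqref{eqgen}. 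A direct computation gives the Fourier coefficients explicitly: $(v_0)_n(t)=e^{i\omega(n)t}g_n\lambda_n$, $(v_1)_n(t)=-i\varphi(n)e^{i\omega(n)t}\sum_{k+l=n}g_kg_l\lambda_k\lambda_l\,F_n^{k,l}(t)$, and a similar but longer expression for $(v_2)_n$ involving a double frequency sum $q+k+l=n$ and two nested oscillatory integrals; one uses $\partial_t F_n^{k,l}(t)=e^{i\Delta_n^{k,l}t}$, $\overline{F_n^{k,l}(t)}=-F_n^{k,l}(-t)$, the oddness of $\omega,\varphi$ and \eqref{Hmean} throughout.

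Inserting this expansion into $E(\overline{u_m(\varepsilon;t)}u_n(\varepsilon;t))$ and collecting powers of $\varepsilon$: order $\varepsilon^0$ reproduces \eqref{indep}; at order $\varepsilon^1$ the relevant monomials are cubic in the $g$'s, and since the law of each $g_n$ is invariant under $g_n\mapsto e^{i\theta}g_n$ every monomial of odd total degree has zero expectation, so there is no $\varepsilon^1$ term. At order $\varepsilon^2$ the three contributions $E(\overline{(v_0)_m}(v_2)_n)$, $E(\overline{(v_1)_m}(v_1)_n)$, $E(\overline{(v_2)_m}(v_0)_n)$ are quartic in the $g$'s; the same rotation invariance, together with $E(g_n^2)=0$, forces the frequency indices to match in pairs, which in particular forces $m=n$ and yields the Wick sums of $|\lambda_k|^2|\lambda_l|^2$. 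The coincident-frequency configurations — the diagonal $k=l$ in $E(\overline{(v_1)_n}(v_1)_n)$ (contributing through $\Delta_n^{q,q}$ with $2q=n$) and the pairing $k=2n,\ l=-n$ (through $\Delta_n^{2n,-n}$) — produce $E(|g_n|^4)$ where two independent pairings of a Gaussian would give $E(|g_n|^2)^2=1$ with multiplicity $2$, which is precisely the source of the factor $E(|g_n|^4)-2$. This identifies $\delta_n^m\varepsilon^2G_n(\lambda,t)$; differentiating in $t$ and using $\partial_t F_n^{k,l}(t)=e^{i\Delta_n^{k,l}t}$, the $k\leftrightarrow l$ symmetry and the oddness of $\omega,\varphi$ gives the stated formula for $\partial_t G_n$, and $G_n(\lambda,0)=0$ is immediate since $F_n^{k,l}(0)=0$.

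For the bound on $G_n$ one uses $|F_n^{k,l}(t)|\le\min\big(|t|,\,2/|\Delta_n^{k,l}|\big)$ together with the explicit dispersion relations. For BBM, $\omega$ is bounded and $\varphi(n)=n/(1+n^2)$ decays like $|n|^{-1}$; after Cauchy--Schwarz in the convolution $\sum_{k+l=n}$ against the weight furnished by \eqref{Hs}, the threshold $s>3/8$ and the two regimes $\beta(s)=2+2s$ (for $s\ge1$) and $\beta(s)=4s$ come from optimizing the distribution of the available $\varphi$-gains against the $|n|^{2s}$ weight. For KP-II the algebraic identity for $\Delta_n^{k,l}$ (rooted in $k_1^3+l_1^3-n_1^3=-3k_1l_1n_1$ when $k_1+l_1=n_1$, together with the $-n_2^2/n_1$ transverse term) shows $\Delta_n^{k,l}$ is large off a trivial resonant set, so the non-resonant bound on $F_n^{k,l}$ gives the clean $|n|^{-2s}$ estimate. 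For KP-I the opposite sign of the transverse term enlarges the resonant set, forcing the crude bound $|F_n^{k,l}(t)|\le|t|$; this is what produces both the $t^2$ factor and the loss $|n|^{2-2s}$ (two powers of $|n|$ from $\varphi(2n)\varphi(n)$).

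Finally, $R(\varepsilon;t,m,n)$ collects the finitely many explicit expectations $E(\overline{(v_i)_m}(v_j)_n)$ with $3\le i+j\le4$ and the cross terms involving $w$; each is handled by Cauchy--Schwarz, $|E(\overline{a}\,b)|\le(E|a|^2)^{1/2}(E|b|^2)^{1/2}$, reducing matters to second-moment bounds. For the iterates $v_1,v_2$ these are again Gaussian computations producing a definite frequency weight and time power; for $w$ one invokes the multilinear estimates underlying the global well posedness of each model at the stated regularity, combined with the sub-Gaussian tails \eqref{sub-gauss}, which control the moments of $\|u_0\|_{H^s}$ and of the random multilinear quantities feeding the iteration. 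The main obstacle is precisely this last step: obtaining the frequency-localized bound on $|R|$ requires tracking one spatial derivative through $J=\partial_{x_1}$ — a gain $\sim|n|^{-1}$ for BBM, a loss $\sim|n|$ for the KP models — against the oscillatory gain coming from the $F_n^{k,l}$'s, so that the outcome is governed entirely by the resonance structure of the dispersion relation; the contrast between $|t|(1+|t|)$ and $|t|^3$ in the conclusion reflects the presence of a non-resonant smoothing for BBM and KP-II and its absence for KP-I.
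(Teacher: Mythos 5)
Your overall architecture (Picard expansion, Wick-type evaluation of the quartic expectations with the $E(|g_n|^4)-2$ correction from coincident pairings, resonance lower bounds on $\Delta_n^{k,l}$ to control $F_n^{k,l}$, Cauchy--Schwarz plus moment bounds for the remainder) matches the paper's, and your identification of the order-$\varepsilon^2$ term and of the role of the non-resonance identities for BBM and KP-II is essentially correct. Two structural differences are worth noting but are not errors: the paper expands only one step further than the free evolution, writing $S(-t)u=u_0+\varepsilon b+\varepsilon^2 c(\varepsilon)$ with $c$ the \emph{full} remainder, and extracts $G_n$ from $\partial_t E(\overline{v_m}v_n)$, which needs only the explicit first iterate $b$ (two $a$'s and one $b$); your route through an explicit $v_2$ and the three pairings $E(\overline{(v_0)_m}(v_2)_n)+E(\overline{(v_1)_m}(v_1)_n)+E(\overline{(v_2)_m}(v_0)_n)$ is equivalent but heavier.

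The genuine gap is in the control of the remainder. You propose to bound $w$ by ``invoking the multilinear estimates underlying the global well-posedness of each model,'' but this does not deliver what the theorem needs: a deterministic bound on the remainder valid up to times $|t|\le 1/(C\varepsilon)$ whose dependence on the random data is mild enough to be integrated against the law of $u_0$. Local well-posedness theory gives bounds on time intervals whose length depends on $\|u_0\|_{H^s}$, and iterating them produces growth factors that cannot be controlled uniformly in the probability event. The paper's essential device, which you do not identify, is an $L^2$ (resp.\ $H^1$ for BBM) energy identity for the remainder in which the cubic self-interaction vanishes exactly, $\int c\,S(-t)\partial_{x_1}\big((S(t)c)^2\big)=\tfrac13\int\partial_{x_1}(S(t)c)^3=0$, and the quadratic-in-$c$ term is rewritten via $\int f\,\partial_{x_1}(fg)=\tfrac12\int f^2\partial_{x_1}g$; Gronwall then yields a factor $e^{c\varepsilon|t|\|u_0\|_{H^s}(1+\varepsilon|t|\|u_0\|_{H^s})}$, which for $|t|\lesssim\varepsilon^{-1}$ is $e^{c\|u_0\|_{H^s}+c\|u_0\|^2_{H^s}}$ and hence integrable by the sub-Gaussian tail $P(\|u_0\|_{H^s}\ge R)\le Ce^{-cR^2}$. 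Without this cancellation the time of validity of the remainder estimate depends on the event (the paper makes this point explicitly), and your expectation bounds for $|t|\le 1/(C\varepsilon)$ do not follow. Two smaller issues: the BBM dichotomy $\beta(s)=4s$ versus $2+2s$ rests on the specific interpolation $|k|^{s+1}|l|^{1-s}\le k^2+l^2$ in the factorized $\Delta_n^{k,l}$, not on a generic Cauchy--Schwarz optimization; and for KP-I the loss $|n|^{2-2s}$ comes from $\varphi(n)^2=n_1^2$ against the convolution $\sum_{k+l=n}|\lambda_k|^2|\lambda_l|^2\lesssim|n|^{-2s}$ in the main sum, not from the quartic-moment correction term $\varphi(2n)\varphi(n)$ alone.
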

Notice that, in the case of BBM and KP-II, only for $|t|=o(\varepsilon^{-1/2})$, the third term in \eqref{mea} in negligible with respect to the second.

It is possible that the estimates on the remainder $R$ could be improved.

It is remarkable that in the case of the BBM equation, if $\lambda_k=\frac{1}{\sqrt{1+k^2}}$ and $E(|g_n|^4)=2$ then  $G_n(\lambda,t)=0$. 
This goes with the fact, due to the first author \cite{dS}, that the measure (on $H^{1/2-}$) induced by
$$\sum_n \frac{g_n}{\sqrt{1+n^2}} e^{in x}, $$
where $g_n$ are Gaussian variables, is invariant by the flow of BBM. Indeed, this measure is a renormalization of the formal measure
$$e^{- \|u\|_{H^1}^2}du$$
and the $H^1$ norm of the solution of BBM is conserved by the evolution. 
In this particular case for $\lambda_n$ and $g_n$ the terms of higher order should also vanish as shows the next proposition.
\begin{proposition}\label{miracle}
Consider the BBM equation.
Let $\lambda_k=\frac{1}{\sqrt{1+k^2}}$  and let $g_n=\frac{1}{\sqrt{2}}(h_n+il_n)$, with $h_n,l_n\in {\mathcal N}(0,1)$. 
Then with the notations of Theorem~\ref{result},
$$
G_n(\lambda,t) =  R(\varepsilon;t,m,n)=0,\qquad \forall\, (m,n),\,\, \forall\, t,\,\, \forall\, \varepsilon.
$$
\end{proposition}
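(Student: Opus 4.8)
The plan is to exploit the invariance of the Gaussian measure $d\mu = e^{-\|u\|_{H^1}^2}du$ (suitably renormalized) under the BBM flow, combined with its invariance under the linear flow $S(t)$, to show that the full correlation matrix $E(\overline{u_m(\varepsilon;t)}u_n(\varepsilon;t))$ is \emph{independent of $t$} and equal to its value at $t=0$. Indeed, by \cite{dS} the random function $u_0 = \sum_n \frac{g_n}{\sqrt{1+n^2}}e^{inx}$ (with $g_n$ standard complex Gaussians) has law $\mu$, and this law is preserved by the flow map $\Phi_\varepsilon(t)$ of BBM. Hence the law of $u(\varepsilon;t,\cdot)$ equals the law of $u_0$ for every $t$ and every $\varepsilon$, so in particular the joint law of any finite family of Fourier coefficients $(u_n(\varepsilon;t))_n$ equals that of $(g_n\lambda_n)_n$. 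Consequently
$$
E(\overline{u_m(\varepsilon;t)}u_n(\varepsilon;t)) = E(\overline{g_m\lambda_m}\,g_n\lambda_n) = \delta_n^m|\lambda_n|^2
$$
for all $t,\varepsilon,m,n$. Comparing with the expansion \eqref{mea}, which is an identity of analytic functions of $\varepsilon$ on $(0,1]$ with the stated remainder control, and matching powers of $\varepsilon$, forces $\delta_n^m\varepsilon^2 G_n(\lambda,t) + \varepsilon^3 R(\varepsilon;t,m,n) \equiv 0$; since $G_n$ does not depend on $\varepsilon$ and $R$ is bounded as $\varepsilon\to 0$, both terms vanish identically.

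The key steps, in order, are: (i) recall precisely the statement from \cite{dS}, namely that $\mu$ is a well-defined probability measure on $H^{1/2-}(\T)$ whose Fourier coefficients are independent complex Gaussians with $E(|u_n|^2) = (1+n^2)^{-1}$, and that it is invariant under the global BBM flow — this uses global well-posedness of BBM in the relevant space and conservation of $\|u\|_{H^1}$ along the (regularized) flow; (ii) identify the law of the initial datum \eqref{data} with our specific choice $\lambda_k = (1+k^2)^{-1/2}$, $g_n$ Gaussian, as exactly $\mu$; (iii) deduce that for each fixed $t,\varepsilon$ the pushforward of $\mu$ under $\Phi_\varepsilon(t)$ is again $\mu$, hence $u(\varepsilon;t,\cdot)$ and $u_0$ are equal in distribution; (iv) read off equality of the second moments of Fourier coefficients, obtaining $E(\overline{u_m(\varepsilon;t)}u_n(\varepsilon;t)) = \delta_n^m(1+n^2)^{-1}$; (v) combine with \eqref{mea} and separate the $\varepsilon$-powers to conclude $G_n \equiv 0$ and $R \equiv 0$.

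The main subtlety — rather than obstacle — is step (v): one must argue that \eqref{mea} is genuinely an expansion in $\varepsilon$ with $G_n$ \emph{independent} of $\varepsilon$ and $R$ bounded (by the estimates in Theorem~\ref{result}) as $\varepsilon\to 0^+$, so that the vanishing of the $O(\varepsilon^2)$ and $O(\varepsilon^3)$ contributions can be extracted term by term. Concretely, $\varepsilon^2 G_n(\lambda,t) = -\varepsilon^3 R(\varepsilon;t,m,n)$ for all $\varepsilon\in(0,1]$ gives $G_n(\lambda,t) = -\varepsilon R(\varepsilon;t,m,n)$, and letting $\varepsilon\to 0$ with $|R|\le C|t|(1+|t|)$ uniformly yields $G_n(\lambda,t)=0$; then $R(\varepsilon;t,m,n)=0$ as well. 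One should also note that the hypotheses of Theorem~\ref{result} are met here: $\lambda \in H^s$ for every $s<1/2$, in particular for some $s>3/8$, so the expansion \eqref{mea} is valid. A secondary point worth a sentence is that the symmetry assumptions on $g_n$ (here $E(g_n^2)=0$, $E(|g_n|^4)=2$) are exactly those used to derive \eqref{mea}, so there is no inconsistency; the content of the proposition is that for this distinguished pair $(\lambda,g)$ the measure-theoretic rigidity kills all the correction terms that are generically nonzero.
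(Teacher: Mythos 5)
Your proposal is correct and takes essentially the same route as the paper: both rest on the invariance of the Gaussian measure $\mu$ under the BBM flow from \cite{dS}, which forces $E(\overline{u_m(\varepsilon;t)}u_n(\varepsilon;t))=\delta_n^m|\lambda_n|^2$ for all $t$ and $\varepsilon$, after which $G_n\equiv 0$ and $R\equiv 0$ follow from \eqref{mea}. Your step (v), separating the powers of $\varepsilon$ using that $G_n$ is $\varepsilon$-independent and $R$ is uniformly bounded, is left implicit in the paper but is a correct and worthwhile clarification.
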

However, in the proof of Theorem~\ref{result}, 
the computation of $G_n$ depends only on $E(|g_n|^2)$ and $E(|g_n|^4)$, 
which gives a larger framework for almost remaining decorrelated initial data.
The assumptions on the random variables that they have large Gaussian deviation estimates is imposed in order get the analytic bounds on
$|R(\varepsilon;t,m,n)|$. 
\begin{remark}{\rm The idea underlying the computation of $G_n$ comes from the theory of wave turbulence and the notion of statistical equilibrium. Indeed, as stochastic laws invariant through the flow of one conservative Hamiltonian  PDE tend to be quite rare, 
and to broaden our views on the topic, statistical equilibrium is defined as the next best thing, that is, a law whose moments of order $2$, i.e. the $E(|u_n|^2)$ are unchanged by the evolution in time. With $F_n((y_m)_{m\in \D^d}, t) = \partial_t G_n((\sqrt{y_m})_m,t)$, if we replace $|\lambda_m|^2$ by $E(|u_m|^2) + O(\varepsilon^2)$ in the following expression of $\partial_t E(|u_n|^2)$, we formally get that : 
$$
\partial_t E(|u_n|^2) = \varepsilon^2 \partial_t G_n(\lambda, t) + O(\epsilon^3) = \varepsilon^2 F_n((E(|u_m|^2))_m, t) + O(\varepsilon^3)\; .
$$
Hence, by neglecting the remainder because of its order in $\varepsilon$, we have a closed equation on the $E(|u_m|^2)$ detecting statistical equilibrium : 
$$
\forall t,\forall n, \; \partial_t E(|u_n|^2) = F_n((E(|u_m|^2))_m,t) = 0\; .
$$
When one takes the weak limit of $F_n$ when $t$ goes to $\infty$, only the resonance terms remain. In this sense, 
$$\forall n\; ,\; \lim_{t\rightarrow \infty} F_n((E(|u_m|^2))_m,t) = 0$$
is the kinetic equation corresponding to statistical equilibrium or KZ spectra in the wave turbulence theory. 
Namely, if for instance  $E(|g_n|^4)=2$ then the equation can be written as
$$
\forall n\; ,\; 
\sum_{k+l=n,\Delta_n^{k,l}=0} \left(\varphi(n) (E(|u_k|^2))  (E(|u_l|^2)) - \varphi(k) (E(|u_n|^2))  (E(|u_l|^2)) -\varphi(l) (E(|u_k|^2))  (E(|u_n|^2)) \right) = 0 \; .
$$
One can see that it depends only on the $E(|u_n|^2)$, meaning that the solution is invariant through dephasing on each wave length.
We did not make a serious effort in either finding solutions of this equation or of the more general 
\begin{equation}\label{est}
\forall n\; \forall t \; ,\; G_n(\lambda,t)=0
\end{equation}
 but we believe that the solutions of the first one, would they exist, would be consistent with the KZ spectra and the actual status of the wave turbulence theory. 
These solutions would act as a statistics into which the difference between $E(|u_n(\varepsilon; t)|^2)$ and its initial value would be negligible.
} \end{remark}
 \begin{remark}{\rm
 As a matter of a simple observation, inspired by the discussion on the BBM equation, we have that
 if
 $E(|g_n|^4)=2$ 
 then
 $
 |\lambda_{k}|^2=\varphi(k)/k_1,
 $
 is a solution of \eqref{est}. 
 In such a situation the quantity $E(|u_n(\varepsilon;t)|^2)$ is the same as its initial value at $t=0$ up to a correction of order $\varepsilon^3$, at least for times of order 
 $1$. }
 \end{remark}

\begin{remark}{\rm It also seems that one should be able to get similar results for 
$$
 E(\overline{u_{m_1} ...u_{m_k}}  u_{n_1} ...u_{n_k}) ) \; ,
$$
with $k>1$, thus approaching the more general law of the solution instead of only the covariances between the amplitudes of the Fourier modes.}
\end{remark}
Let us observe that the results of Theorem~\ref{result} for the KP equation equations also apply for the KdV equation, by considering data independent 
of the transverse variable $x_2$. The result for the BBM and KP-II equations is stronger compared to the result for the KP-I equation thanks to the absence of resonance interactions. 

The regularity assumptions of Theorem~\ref{result} are more restrictive the ones required by the well-posedness results quoted above. 
It is a natural open question whether in  Theorem~\ref{result} one can cover the weaker regularity assumptions of the well-posedness results. We reckon that some new phenomenons may occur at low regularities.

Let us now explain the main ingredients of the proof of Theorem~\ref{result}. The first step is to get deterministic bounds on the first two Picard iterations. 
Similarly to the Cauchy problem analysis, the presence of resonances plays an important role in the control of the second iteration.
We use some algebraic cancellations of the average between the first and the second iterations. Similar computations appear in the
Physics literature. The main novelty in our work is the control on the remainder (once one singles out the first two iterations). Here we use an energy
method based on a conservation law together with the exponential integrability of the first two iterations for times of order $\lesssim \varepsilon^{-1}$.   

The remaining part of this paper is organized as follows. In the next section we prove Proposition~\ref{miracle}.
In the subsequent sections, we prove Theorem~\ref{result}.
\section{Proof of Proposition~\ref{miracle}}
Denote by $\mu$ the measure on $H^{1/2-}$ induced by the map
$$
\omega\longmapsto \sum_n \frac{g_n(\omega)}{\sqrt{1+n^2}} e^{in x}\equiv u^\omega \,.
$$
Denote by $\Phi(t)$ the global flow of the BBM on $L^2$ defined in \cite{D}.
Thanks to \cite{dS}, 
\begin{equation}\label{invariance}
\int_{H^{1/2-}}F(u)d\mu(u)=\int_{H^{1/2-}}F(\Phi(t)(u))d\mu(u),\quad
\forall\, t\in\R,\,\, \forall\, F\in L^1(d\mu).
\end{equation}
Denote by $\Pi_n$ the projection to the $n$'th Fourier mode.
Then
\begin{eqnarray*}
E(\overline{u_m(\varepsilon;t)}u_n(\varepsilon;t)) & = &
\int_{\Omega}\overline{\Pi_m \Phi(t)(u^\omega)}\Pi_n \Phi(t)(u^\omega)dp(\omega)
\\
& = &
\int_{H^{1/2-}}\overline{\Pi_m \Phi(t)(u)}\Pi_n \Phi(t)(u)d\mu(u)\,.
\end{eqnarray*}
Using \eqref{invariance} with $F(u)=\overline{\Pi_m (u)}\Pi_n (u)$, we get
\begin{eqnarray*}
E(\overline{u_m(\varepsilon;t)}u_n(\varepsilon;t)) & = &
\int_{H^{1/2-}}\overline{\Pi_m (u)}\Pi_n (u)d\mu(u)
\\
& = &
\int_{\Omega}\overline{\Pi_m (u^\omega)}\Pi_n (u^\omega)dp(\omega)
\\
& = & 
\delta_n^m |\lambda_n|^2\,.
\end{eqnarray*}
This completes the proof of Proposition~\ref{miracle}.
\section{Deterministic estimates for the expansion at order 2 of the solutions}
In this section $u_0$ is a deterministic $H^s$ function.
Consider \eqref{eqgen} with data $u_0$.
We suppose that
$$
u_0(x) = \sum_{n\in \D^d} a_n e^{in\cdot x},\quad  a_{-n} = \overline{a_n}\,.
$$
Let us expand the solution of \eqref{eqgen} with data $u_0$ at order $2$ in $\varepsilon$.
For simplification in the computations, let 
$$v(\varepsilon;t,x) \equiv S(-t) u(\varepsilon, t ,x) = \sum_{n\in \D^d} v_n(\varepsilon,t) e^{in\cdot x}$$
such that $v$ satisfies : 
$$
\partial_t v = -\varepsilon S(-t)J \big( (S(t)  v)^2\big)
$$
with initial datum $u_0$. Write then
$$
v(\varepsilon;t,x) = u_0(x) + \varepsilon b(t,x) + \varepsilon^2 c(\varepsilon;t,x) 
$$
and
$$
v_n(\varepsilon;t) = a_n + \varepsilon b_n(t) + \varepsilon^2 c_n(\varepsilon;t)
$$
with 
$$
b(t,x) = -\int_{0}^t S(-\tau)J \big( (S(\tau) u_0 )^2\big)d\tau \; .
$$
Then
$$
b_n(t) = - i\varphi(n) \sum_{k+l=n} a_k a_l F_{n}^{k,l}(t)\,.
$$
The next no resonance lemma plays a key role in out analysis.
\begin{lemma}
Let $k+l=n$. Then KP-II and BBM present no resonances, that is for KP-II
$$
|\Delta_n^{k,l}| \geq 3 |n_1k_1l_1 |\neq 0
$$
and for BBM
$$
\Delta_n^{k,l} = \frac{nkl(k^2+l^2+kl+3)}{(1+n^2)(1+k^2)(1+l^2)} \neq 0\; .
$$
\end{lemma}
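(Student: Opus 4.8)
The plan is to handle the two equations separately, in each case by a direct algebraic manipulation of $\Delta_n^{k,l}=\omega(k)+\omega(l)-\omega(n)$ under the constraint $k+l=n$ --- so $n_1=k_1+l_1$, and in the KP-II case also $n_2=k_2+l_2$ --- keeping in mind that on the index set $\D^d$ the first components $n_1,k_1,l_1$ are all nonzero integers.

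\emph{KP-II.} Write $\omega(n)=n_1^3-n_2^2/n_1$ and split $\Delta_n^{k,l}$ into a longitudinal part $k_1^3+l_1^3-n_1^3$ and a transverse part $n_2^2/n_1-k_2^2/k_1-l_2^2/l_1$. For the longitudinal part, $a^3+b^3-(a+b)^3=-3ab(a+b)$ with $a=k_1,b=l_1$ gives $-3n_1k_1l_1$. For the transverse part I would clear the common denominator $n_1k_1l_1$ and expand the numerator using $n_2=k_2+l_2$ and $n_1=k_1+l_1$; the mixed terms combine and the numerator collapses to $-(k_1l_2-l_1k_2)^2$. Hence
$$\Delta_n^{k,l}=-3n_1k_1l_1-\frac{(k_1l_2-l_1k_2)^2}{n_1k_1l_1}.$$
The two summands carry the same sign (opposite to that of $n_1k_1l_1$), so there is no cancellation and $|\Delta_n^{k,l}|=3|n_1k_1l_1|+(k_1l_2-l_1k_2)^2/|n_1k_1l_1|\ge 3|n_1k_1l_1|$; since $n_1,k_1,l_1$ are nonzero integers this last quantity is $\ge 3>0$, which gives both claimed assertions simultaneously.

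\emph{BBM.} Here $\omega(n)=-n/(1+n^2)$, so $\Delta_n^{k,l}=n/(1+n^2)-k/(1+k^2)-l/(1+l^2)$. I would put this over the common denominator $(1+n^2)(1+k^2)(1+l^2)$ and expand the numerator $N=n(1+k^2)(1+l^2)-k(1+n^2)(1+l^2)-l(1+n^2)(1+k^2)$. Organizing $N$ by homogeneous degree (one, three, and five) and then substituting $n=k+l$ --- using $n-k-l=0$, $n-l=k$, $n-k=l$, and $k^3+l^3-n^3=-3kln$ --- makes all the cancellations transparent and leaves exactly the expression $nkl\,(k^2+kl+l^2+3)$ in the statement. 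Non-vanishing is then immediate: $k^2+kl+l^2=(k+l/2)^2+\tfrac34 l^2\ge 0$, so $k^2+kl+l^2+3\ge 3$, while $n,k,l$ are nonzero integers.

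Both computations are in essence bookkeeping; the one genuinely non-mechanical point is recognizing the transverse KP-II numerator as the perfect square $-(k_1l_2-l_1k_2)^2$, since it is exactly this definite sign that prevents the longitudinal and transverse contributions from cancelling and yields the clean lower bound $3|n_1k_1l_1|$. I expect the BBM polynomial expansion to be the most error-prone step, which is why I would keep the degree-one, degree-three, and degree-five parts of $N$ separate before invoking $n=k+l$.
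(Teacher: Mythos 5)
Your computation is correct and is precisely the ``straightforward computation'' that the paper omits: the identity $a^3+b^3-(a+b)^3=-3ab(a+b)$ together with the definite-sign transverse numerator $-(k_1l_2-l_1k_2)^2$ for KP-II, and the degree-by-degree expansion of the numerator for BBM. The only blemish is an overall sign in the BBM case: the numerator actually comes out as $-nkl(k^2+kl+l^2+3)$ (the form the paper itself uses later, in Section 5, as $-\frac{kln(3+n^2-kl)}{(1+n^2)(1+k^2)(1+l^2)}$), but this is immaterial to the non-vanishing claim and to all subsequent uses, which only involve $|\Delta_n^{k,l}|$.
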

The proof is a straightforward computation.
The consequence of this lemma is that the norm of $b$ can be bounded independently from $t$ for KP-II and BBM.

\begin{lemma}\label{second_iteration}
Suppose that for KP-II, the initial datum $u_0 $ belongs to $H^s$ with $s> \frac{1}{2}$, then
there exists $C$ independent of $t$ and $u_0$ such that
$$\|b(t)\|_{H^s} \leq C \|u_0\|_{H^s}^2 \; .$$
For BBM, suppose that the initial datum is in $H^s$, with $s > \frac{1}{4}$ and let $\sigma$ such that $0 \leq \sigma <2s - \frac{1}{2}$, then if $s\leq 1$,
$$
\|b(t)\|_{H^\sigma} \leq C \|u_0\|_{H^s}^2\; ,
$$
and if $s > \frac{1}{2}$,
$$
\|b(t)\|_{H^s} \leq C \|u_0\|_{H^s}^2\; .
$$
\end{lemma}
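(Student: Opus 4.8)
The plan is to convert the identity $b_n(t)=-i\varphi(n)\sum_{k+l=n}a_k a_l\,F_n^{k,l}(t)$ into bilinear estimates for weighted convolutions of the sequence $(|a_n|)$, the weight being the multiplier $\varphi(n)F_n^{k,l}(t)$, whose size is governed by the no resonance lemma. Since $k+l=n$ forces $\Delta_n^{k,l}\neq 0$, we may integrate explicitly and obtain $F_n^{k,l}(t)=(e^{i\Delta_n^{k,l}t}-1)/(i\Delta_n^{k,l})$, hence $|F_n^{k,l}(t)|\leq 2/|\Delta_n^{k,l}|$ uniformly in $t$. Plugging in the quantitative lower bounds on $|\Delta_n^{k,l}|$ from the same lemma, and recalling $\varphi(n)=n_1$ for KP-II and $\varphi(n)=n/(1+n^2)$ for BBM, I get for $k+l=n$ the pointwise bounds $|\varphi(n)F_n^{k,l}(t)|\leq \tfrac{2}{3}|k_1 l_1|^{-1}$ in the KP-II case and $|\varphi(n)F_n^{k,l}(t)|\leq C\,\min(|k|,|l|)/\max(|k|,|l|)$ in the BBM case; the latter follows from $k^2+l^2+kl+3\gtrsim 1+k^2+l^2$ combined with $(1+k^2)(1+l^2)\le 2(1+k^2+l^2)\min(k^2,l^2)$, valid since $k,l\in\D^1$ so $|k|,|l|\geq 1$. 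Thus $|b_n(t)|$ is dominated, uniformly in $t$, by the corresponding weighted self-convolution of $(|a_n|)$, and the lemma reduces to a harmonic-analytic statement.

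For KP-II I would set $\alpha_k=|k|^s|a_k|$, so that $\sum_k\alpha_k^2=\|u_0\|_{H^s}^2$, and use $|n|^s\lesssim |k|^s+|l|^s$ together with the $k\leftrightarrow l$ symmetry to dominate $|n|^s|b_n(t)|$ by the convolution of the sequence $k\mapsto \alpha_k/|k_1|\in\ell^2$ with the sequence $l\mapsto\alpha_l/(|l|^s|l_1|)$. By Young's inequality it then suffices that this second sequence belong to $\ell^1$, and by Cauchy--Schwarz this amounts to $\sum_{l\in\D^2}|l|^{-2s}|l_1|^{-2}<\infty$, which holds exactly for $s>1/2$: summing first over $l_2$ gives a quantity comparable to $|l_1|^{1-2s}$ because $2s>1$, and then $\sum_{l_1\neq 0}|l_1|^{-1-2s}<\infty$. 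This gives $\|b(t)\|_{H^s}\lesssim\|u_0\|_{H^s}^2$.

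For BBM with $s>1/2$ one only needs $\min(|k|,|l|)/\max(|k|,|l|)\le 1$, so that $|b_n(t)|\lesssim\sum_{k+l=n}|a_k||a_l|=\widehat{w^2}(n)$ with $w=\sum_m|a_m|e^{imx}$; since $\|w\|_{H^s}=\|u_0\|_{H^s}$ and $H^s(\T)$ is an algebra for $s>1/2$, the claim follows. The genuinely delicate case, which I expect to be the main obstacle, is $1/4<s\le 1$ with $0\le\sigma<2s-1/2$: using $\min/\max\le 1$ only reaches the algebra range $s>1/2$, whereas the naive splitting $\min(|k|,|l|)=|k|$ spends a whole derivative on $a_k$ and forces $s\geq 1$. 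The remedy is to improve the multiplier bound to $|\varphi(n)F_n^{k,l}(t)|\le C\,(\min(|k|,|l|)/\max(|k|,|l|))^s$, which is legitimate because the ratio is $\le 1$ and $s\le 1$; in terms of $\alpha_k=|k|^s|a_k|$ the corresponding weighted term $|a_k||a_l|(\min/\max)^s$ becomes exactly $\alpha_k\alpha_l\,\max(|k|,|l|)^{-2s}$. On the region $|k|\le|l|$ one has $|n|\le 2|l|$, so $|n|^\sigma|b_n(t)|$ is dominated by the convolution of $(\alpha_l)$ with $l\mapsto\alpha_l|l|^{\sigma-2s}$; Young's inequality and Cauchy--Schwarz then reduce everything to $\sum_{l\in\Z\setminus\{0\}}|l|^{2(\sigma-2s)}<\infty$, i.e.\ to $\sigma<2s-1/2$, which is precisely the hypothesis. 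The mirror region $|l|\le|k|$ is handled identically, and choosing $\sigma=s$ (admissible as soon as $s>1/2$) also recovers the $H^s$ bound for $1/2<s\le 1$. Everything except the choice of how to distribute the gain $\min/\max$ is routine Young and Cauchy--Schwarz bookkeeping.
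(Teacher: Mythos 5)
Your argument is correct and is essentially the proof in the paper: both rest on the uniform bound $|F_n^{k,l}(t)|\leq 2/|\Delta_n^{k,l}|$ from the no-resonance lemma, the gains $|\varphi(n)|/|\Delta_n^{k,l}|\lesssim |k_1l_1|^{-1}$ for KP-II and $|\varphi(n)|/|\Delta_n^{k,l}|\lesssim (\min(|k|,|l|)/\max(|k|,|l|))^{s}$ for BBM, followed by the same $\ell^2\ast\ell^1$ (Young plus Cauchy--Schwarz) bookkeeping reducing matters to $\sum_{l}|l|^{-2s}|l_1|^{-2}<\infty$ (hence $s>1/2$) and $\sum_l |l|^{2\sigma-4s}<\infty$ (hence $\sigma<2s-1/2$), respectively. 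The only cosmetic deviations are that you organize the BBM multiplier bound symmetrically via $\min/\max$ and handle all of $s>1/2$ at once through the algebra property of $H^s(\T)$, where the paper uses the asymmetric bound $|l|^s|k|^{-s}$ and treats $s\geq 1$ separately via $|\Delta_n^{k,l}|\gtrsim|\varphi(n)|$.
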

\begin{proof} We use the form of $b_n$ to give the bound
$$|n|^{2s} |b_n|^2 \leq |n|^{2s} |\varphi(n)|^{2}\sum_{k,l}| a_k a_{n-k}\overline{a_l}\overline{a_{n-l}}| |F_n^{k,n-k}(t)|\; |F_n^{l,n-l}(t)| \; .$$

Then, as $s\geq 0$, $|n|^s \leq C_s (|k|^s+ |n-k|^s)$, and using the facts that the sum is symmetric in $k$, $n-k$ and $l$, $n-l$ and that there is no resonances then 
$|F_{n}^{k,n-k}(t)|\leq |\frac{2}{\Delta_n^{k,n-k}}|$,
$$
|n|^{2s}|b_n|^2  \leq C_s |\varphi(n) | \sum_{k,l} |a_k|\; |n-k|^s |a_{n-k}| |a_l|\;|n-l|^s|a_{n-l}|\frac{1}{|\Delta_n^{k,n-k}\Delta_n^{l,n-l}|}\; .
$$
For KP-II, $\frac{|\varphi(n)|}{|\Delta_n^{k,n-k}|}\leq \frac{1}{3|k_1|}$, thus by summing over $n$ and using a Cauchy-Schwartz inequality : 
$$
\|b\|_{H^s}^2\leq \sum_{k,l}\frac{|a_k|}{|k_1|}\frac{|a_l|}{|l_1|}\sum_n |n-k|^s |a_{n-k}|\; |n-l|^s |a_{n-l}|
$$
$$
\|b\|_{H^s} \leq C_s\|u_0\|_{H^s} \Big|\sum_k \frac{|a_k|}{|k_1|} \Big|
\leq C_s \|u_0\|_{H^s}^2\sqrt{\sum_{k\in \D^d} \frac{1}{|k_1|^2\; |k|^{2s}}}
$$
and as $d=2$, the series converges as long as $s> \frac{1}{2}$.

In the case of BBM, we have : 
$$
\Delta_n^{k,l} = \frac{nkl(k^2+l^2+kl+3)}{(1+n^2)(1+k^2)(1+l^2)}\; .
$$
As for $s \in [-1,1]$, we have : 
$$
|k|^{s+1}|l|^{1-s} \leq k^2+l^2 \leq 2(k^2+l^2+kl) \; ,
$$
we conclude that
$$
|\Delta_n^{k,l} | \geq C \frac{|n|}{1+n^2} \frac{|k|^{s+2}}{1+k^2}\frac{|l|^{2-s}}{1+l^2} \geq C |\varphi(n) | \frac{|k|^s}{|l|^s}\; .
$$
Hence, 
$$
\frac{|\varphi(n)|}{|\Delta_n^{k,l}|} \leq C |l|^s|k|^{-s}\; .
$$
Let us now bound the $H^\sigma$ norm of $b$ in terms of the $H^s$ norm of $u_0$.
Since
$$
b_n = -i\varphi(n) \sum_{k+l = n} a_k a_l F_n^{k,l}(t)
$$
we have that
$$
|b_n| \leq |\varphi(n) |\sum_{k+l=n} |a_k|\; |a_l| \frac{2}{|\Delta_n^{k,l}|}
$$
Using that for $\sigma \geq 0$, $|n|^\sigma \leq C_\sigma (|k|^\sigma +|l|^\sigma)$ and by symmetry of the sum over $k$ and $l$ : 
$$
|n|^\sigma |b_n| \leq C \sum_{k+l=n }|k|^\sigma |a_k|\; |a_l|\; \frac{|\varphi(n)|}{|\Delta_n^{k,l}|}\; .
$$
We then use the bound on $ \frac{|\varphi(n)|}{|\Delta_n^{k,l}|}$  to write
$$
|n|^\sigma |b_n| \leq C \sum_{k+l=n }|k|^{\sigma - s} |a_k| \; |l|^s |a_l|\; .
$$
and therefore
$$
\|b\|_{H^\sigma}^2 \leq C \sum_n \sum_{k,j} |k|^{\sigma - s} |a_k| \; |n-k|^s |a_{n-k}||j|^{\sigma - s} |a_j| \; |n-j|^s |a_{n-j}| \; .
$$
\\
By reversing the order of the sums and using a Cauchy-Schwartz inequality on the sum over $n$ : 
$$\|b\|_{H^\sigma} \leq C ||u_0||_{H^s}\Big( \sum_k |k|^{\sigma -s }|a_k| \Big) $$
and since 
$$
\sum_k |k|^{\sigma -s }|a_k| \leq \left( \sum_k |k|^{2\sigma-4s} \right) ^{1/2}\|u_0\|_{H^s}
$$
and the series converges if $s> \frac{1}{4}+\frac{\sigma}{2}$, we get : 
$$
\|b\|_{H^\sigma} \leq C \|u_0\|^2_{H^s} \; .
$$
For $s\geq 1$ we simply use $|\Delta_n^{k,l}|\geq |\varphi(n)|$ and an argument similar to the one for KP-II yields the claimed bound.
This completes the proof of Lemma~\ref{second_iteration}.
\end{proof}
\begin{remark}
The arguments we presented here for the KP-II equation relax the assumption $s>1$ to $s>1/2$ in \cite{tz}.
\end{remark}
For the KP-I equation, there are resonances and hence a much weaker statement holds. 
\begin{lemma}
For KP-I, it appears that for $s> 1$
$$
\|b\|_{H^{s-1}} \leq C|t| \|u_0\|_{H^s}^2 \; .
$$
\end{lemma}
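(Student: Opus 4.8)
The plan is the following. Since the KP-I dispersion relation $\omega(n_1,n_2)=n_1^3+n_2^2/n_1$ admits genuine three-wave resonances, there is no useful lower bound for $|\Delta_n^{k,l}|$ when $k+l=n$, so the oscillatory estimate $|F_n^{k,l}(t)|\le 2/|\Delta_n^{k,l}|$ used for KP-II and BBM is not available; the only bound at our disposal is the trivial one $|F_n^{k,l}(t)|\le|t|$. Starting from
$$
b_n(t)=-i\varphi(n)\sum_{k+l=n}a_k a_l\,F_n^{k,l}(t),
$$
and using $|\varphi(n)|=|n_1|\le|n|$ together with $|F_n^{k,l}(t)|\le|t|$, I would first record the crude bound
$$
|b_n(t)|\le|t|\,|n|\sum_{k+l=n}|a_k|\,|a_l|.
$$

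To estimate $\|b(t)\|_{H^{s-1}}^2=\sum_{n\in\D^d}|n|^{2(s-1)}|b_n(t)|^2$, I would multiply the previous inequality by $|n|^{s-1}$, absorb the factor $|n|$ coming from $\varphi$ so that $|n|^{s-1}|n|=|n|^s$, and then use $|n|^s\le C_s(|k|^s+|l|^s)$ for $k+l=n$ (valid since $|n|\le|k|+|l|$ and $s>1$) together with the symmetry of the summation in $k$ and $l$, to get
$$
|n|^{s-1}|b_n(t)|\le C|t|\sum_{k+l=n}|k|^s|a_k|\,|a_l|.
$$
The right-hand side is $C|t|$ times the value at $n$ of the convolution of $(|k|^s|a_k|)_k$ with $(|a_l|)_l$, so Young's inequality $\ell^2*\ell^1\hookrightarrow\ell^2$ gives
$$
\|b(t)\|_{H^{s-1}}\le C|t|\,\big\|(|k|^s|a_k|)_k\big\|_{\ell^2}\big\|(|a_l|)_l\big\|_{\ell^1}=C|t|\,\|u_0\|_{H^s}\sum_{l\in\D^d}|a_l|.
$$
Finally I would bound the last sum by Cauchy--Schwarz, writing $\sum_l|a_l|=\sum_l|l|^{-s}\big(|l|^s|a_l|\big)\le\big(\sum_{l\in\D^d}|l|^{-2s}\big)^{1/2}\|u_0\|_{H^s}$, and note that for $d=2$ the series $\sum_{l\in\D^d}|l|^{-2s}$ converges precisely when $s>1$, since there are $O(R)$ lattice points with $|l|=R$; this yields $\|b(t)\|_{H^{s-1}}\le C|t|\,\|u_0\|_{H^s}^2$.

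The difficulty here is not computational but structural: the lack of a spectral gap for $\Delta_n^{k,l}$ forces the replacement of $1/|\Delta_n^{k,l}|$ by $|t|$, and this single step is responsible both for the linear growth in $|t|$ and for the loss of one derivative ($H^{s-1}$ instead of the $H^s$ obtained for KP-II), the threshold $s>1$ being exactly what is needed to sum $(a_l)$ in $\ell^1$ over the two-dimensional lattice. One could try to recover part of this by splitting the frequencies into resonant and nonresonant sets and applying the oscillatory bound on the latter, but for the present purpose the elementary estimate above suffices.
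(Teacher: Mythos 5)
Your proof is correct, and in substance it is the same argument as the paper's, just carried out on the Fourier side. The paper writes $b(t)=-\int_0^t S(-\tau)J\big((S(\tau)u_0)^2\big)\,d\tau$, moves the norm inside the time integral (which is exactly your replacement of $F_n^{k,l}(t)$ by the trivial bound $|t|$), pays one derivative for $J$, and then bounds $\|(S(\tau)u_0)^2\|_{H^s}\le C\|S(\tau)u_0\|_{L^\infty}\|S(\tau)u_0\|_{H^s}$ using the Sobolev embedding $H^s\subset L^\infty$ for $s>1$, $d=2$. Your combination of Young's inequality $\ell^2\ast\ell^1\hookrightarrow\ell^2$ with the Cauchy--Schwarz bound $\sum_l|a_l|\le\big(\sum_{l\in\D^2}|l|^{-2s}\big)^{1/2}\|u_0\|_{H^s}$ is precisely the Fourier-coefficient proof of that product estimate and that embedding, and your convergence condition $2s>2$ reproduces the same threshold $s>1$. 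So the two proofs differ only in presentation: yours matches the convolution-sum style of the KP-II/BBM lemma, while the paper's is a two-line physical-space argument; neither gains anything over the other. One small remark: the inequality $|n|^s\le C_s(|k|^s+|l|^s)$ for $k+l=n$ holds for every $s\ge 0$, not only for $s>1$; the hypothesis $s>1$ enters solely through the summability of $\sum_{l\in\D^2}|l|^{-2s}$.
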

\begin{proof}Use the expression of $b$ to get the bound
$$
\|b(t)\|_{H^{s-1}} \leq 
\int_{0}^t \|J \left( (S(\tau)u_0)^2  \right) d\tau \|_{H^{s-1}} \leq 
C_s\int_{0}^t \|S(\tau)u_0\|_{L^\infty}\|S(\tau)u_0\|_{H^s} \leq C_s |t|\; \|u_0\|^2_{H^s}
$$
since as $d=2$ and $s>1$, the Sobolev embedding $H^s \subset L^\infty$ holds.
\end{proof}
\begin{lemma}The map $c$ satisfies : 
$$
\partial_t c  = -S(-t) J \left( 2 S(t) u_0 S(t) b + \varepsilon ((S(t)b)^2 + 2S(t)u_0 S(t) c) ) + \varepsilon^2 2S(t) b S(t) c + \varepsilon^3 (S(t)c)^2 \right). 
$$
\end{lemma}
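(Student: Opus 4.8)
The plan is to substitute the ansatz $v=u_0+\varepsilon b+\varepsilon^2 c$ directly into the equation $\partial_t v=-\varepsilon S(-t)J\big((S(t)v)^2\big)$ and then read off the equation for $c$ by isolating powers of $\varepsilon$. Since $u_0$ is independent of $t$ we have $\partial_t v=\varepsilon\,\partial_t b+\varepsilon^2\,\partial_t c$, so that
$$\partial_t c=\frac{1}{\varepsilon^{2}}\big(\partial_t v-\varepsilon\,\partial_t b\big)=\frac{1}{\varepsilon}\,S(-t)J\big((S(t)u_0)^2-(S(t)v)^2\big),$$
where the second equality uses the equation satisfied by $v$ together with the identity $\partial_t b=-S(-t)J\big((S(t)u_0)^2\big)$, which is immediate from the explicit formula $b(t)=-\int_0^tS(-\tau)J\big((S(\tau)u_0)^2\big)d\tau$. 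This is the only place where the precise definition of $b$ enters, and it is what makes the division by $\varepsilon^{2}$ harmless.

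It then remains to expand the difference of squares. Writing $S(t)v-S(t)u_0=\varepsilon\,S(t)(b+\varepsilon c)$ and $S(t)v+S(t)u_0=2S(t)u_0+\varepsilon\,S(t)(b+\varepsilon c)$, we obtain
$$(S(t)u_0)^2-(S(t)v)^2=-\varepsilon\,S(t)(b+\varepsilon c)\Big(2S(t)u_0+\varepsilon\,S(t)(b+\varepsilon c)\Big),$$
so the leftover $1/\varepsilon$ in front of $S(-t)J(\cdot)$ is absorbed. Expanding the product, using $S(t)(b+\varepsilon c)=S(t)b+\varepsilon\,S(t)c$ and bilinearity of the pointwise product, and recollecting by degree in $\varepsilon$ gives
$$2S(t)u_0\,S(t)b+\varepsilon\big((S(t)b)^2+2S(t)u_0\,S(t)c\big)+2\varepsilon^{2}\,S(t)b\,S(t)c+\varepsilon^{3}(S(t)c)^2,$$
and applying $-S(-t)J(\cdot)$ to this expression yields exactly the claimed right-hand side.

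There is no real analytic obstacle here: the whole argument is bookkeeping in the powers of $\varepsilon$, using only linearity of $S(t)$ and of $J$. The one point to be careful about is that $c$, defined as $\varepsilon^{-2}(v-u_0-\varepsilon b)$ for fixed $\varepsilon\in(0,1]$, is differentiable in $t$ with values in the Sobolev space in which \eqref{eqgen} is solved, so that the identity above holds in a strong sense; this is guaranteed by the global well-posedness quoted in the introduction together with Lemma~\ref{second_iteration} (and its KP-I counterpart), which already furnish the required regularity and time continuity of $v$ and of $b$. Finally, the initial condition $c|_{t=0}=0$ follows at once from $v|_{t=0}=u_0$ and $b(0)=0$, and will be used when $c$ is subsequently recast through its Duhamel formula.
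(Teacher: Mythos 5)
Your proof is correct and follows the same route the paper indicates: the paper's entire proof is the remark that the identity ``comes from a combination of the equations satisfied by $v$ and $b$,'' which is precisely the substitution and $\varepsilon$-bookkeeping you carry out explicitly. The factorization of $(S(t)u_0)^2-(S(t)v)^2$ and the cancellation via $\partial_t b=-S(-t)J\big((S(t)u_0)^2\big)$ are exactly what is intended, and your expansion reproduces the claimed right-hand side.
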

\begin{proof}
It comes from a combination of the equations satisfied by $v$ and $b$.
\end{proof}
We now would like to prove that $c$ is of order $0$ in $\varepsilon$ but that its order in time depends on the cases, whether the equation displays resonances or not.
\begin{lemma}\label{energy}
For KP equations, one can bound the $L^2$ norm of $c$. In the case of KP-I (with resonances), it comes if $s>3$
$$
\|c(t)\|_{L^2}\leq 
C \left( t^2 \|u_0\|_{H^s}^3 + \varepsilon |t|^3 \|u_0\|_{H^s}^4 \right) e^{c\varepsilon\, |t|\,\|u_0\|_{H^s}(1+\varepsilon |t| 
\|u_0\|_{H^s})}\; .
$$
And for KP-II, it comes if $s>2$
$$
\|c(t)\|_{L^2}\leq C |t| \left( \|u_0\|_{H^s}^3 + \varepsilon \|u_0\|_{H^s}^4 \right) e^{c\varepsilon \,|t|\, \|u_0\|_{H^s}(1+\varepsilon \|u_0\|_{H^s})}\; .
$$
For BBM, the relevant quantity is the $H^1$ norm of $c$, it comes if $s>3/8$ : 
$$
\|c(t)\|_{H^1} \leq  C|t| (\|u_0\|_{H^s}^3 +\varepsilon \|u_0\|_{H^s}^4 ) e^{c \varepsilon\, |t|\, \|u_0\|_{H^s}(1+ \varepsilon \|u_0\|_{H^s})} 
\; .
$$
\end{lemma}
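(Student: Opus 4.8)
The plan is to run an energy estimate on the equation satisfied by $c$ given in the previous lemma, using in each case the conservation law adapted to the equation (the $L^2$ conservation for KP-II and KP-I after the change of variables $v=S(-t)u$, and the $H^1$-type conservation for BBM, which is why $\|c\|_{H^1}$ is the right quantity there). First I would rewrite $\partial_t c = -S(-t)J(\,\cdot\,)$ and pair it with $c$ (resp.\ with the appropriate $H^1$ inner product for BBM), so that $\tfrac12\frac{d}{dt}\|c\|_{X}^2$ equals a sum of terms, each of which is a trilinear or higher expression in $u_0,b,c$. The terms come in three groups according to their power of $\varepsilon$: the source term $2S(t)u_0\,S(t)b$ (order $\varepsilon^0$ in $c$), the terms linear in $c$ (i.e.\ $2S(t)u_0\,S(t)c$ at order $\varepsilon$, and $2S(t)b\,S(t)c$ at order $\varepsilon^2$), and the purely quadratic-in-$c$ term $\varepsilon^3(S(t)c)^2$.

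The terms linear in $c$ are the ones that must be absorbed into a Gr\"onwall factor: I would bound $|\langle S(-t)J(2S(t)u_0\,S(t)c),\,c\rangle_X|\lesssim \|u_0\|_{H^s}\|c\|_X^2$ and similarly $|\langle S(-t)J(2S(t)b\,S(t)c),c\rangle_X|\lesssim \|b\|_{?}\|c\|_X^2\lesssim \|u_0\|_{H^s}^2\|c\|_X^2$, using Lemma~\ref{second_iteration} for the $b$ bound. Here one has to be careful that $J$ contributes a derivative ($\varphi(n)=n_1$ for KP, so $J=\partial_{x_1}$): for KP-II/KP-I in $L^2$ this derivative is what forces $s>2$ (one needs $H^s\subset$ an algebra-type space that controls a derivative loss against the other factor, via the Sobolev embedding $H^{s-1}\subset L^\infty$ on $\T^2$ when $s>2$), and for BBM the multiplier $\varphi(n)=n/(1+n^2)$ is bounded, so the derivative loss is milder and the weaker threshold $s>3/8$ can be reached, provided one uses the $H^1$ structure and the bilinear estimates in the style of Lemma~\ref{second_iteration}. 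The source term is estimated the same way without a $c$ factor: $|\langle S(-t)J(2S(t)u_0\,S(t)b),c\rangle_X|\lesssim \|u_0\|_{H^s}\|b\|_{?}\|c\|_X$; dividing by $\|c\|_X$ gives $\frac{d}{dt}\|c\|_X\lesssim \|u_0\|_{H^s}^3 + (\text{Gr\"onwall terms})$ for KP-II and BBM (time-uniform $b$ bound, hence the clean factor $|t|$), whereas for KP-I one only has $\|b(t)\|_{H^{s-1}}\lesssim |t|\|u_0\|_{H^s}^2$, producing the extra factor of $t$ and hence the $t^2$ growth. The $\varepsilon^3(S(t)c)^2$ term gives $\varepsilon^3\|c\|_X^3$-type contributions; combined with the a priori smallness of $c$ on the relevant time scale $|t|\lesssim \varepsilon^{-1}$, this is lower order and feeds into the exponent of the exponential factor rather than the polynomial prefactor.

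Collecting, I would arrive at a differential inequality of the schematic form
$$
\frac{d}{dt}\|c(t)\|_X \;\lesssim\; P(t)\,\|u_0\|_{H^s}^3 \;+\; \varepsilon\, P(t)\,\|u_0\|_{H^s}^4 \;+\; \varepsilon\,\|u_0\|_{H^s}\bigl(1+\varepsilon\|u_0\|_{H^s}\bigr)\|c(t)\|_X,
$$
with $P(t)\equiv 1$ for KP-II and BBM and $P(t)\sim |t|$ for KP-I; a Gr\"onwall argument then yields exactly the stated bounds, the exponential factor coming from the $c$-linear terms and the polynomial-in-$t$ prefactor from integrating the source. The main obstacle, and the place where the regularity thresholds $s>2$ (KP-II), $s>3$ (KP-I), $s>3/8$ (BBM) are really decided, is controlling the derivative in $J$ inside the trilinear forms: one must distribute the single $x_1$-derivative so that the factor carrying it is measured in a space still controlled by $\|u_0\|_{H^s}$ (or $\|b\|$), which for the $L^2$-based KP estimates costs a full power of regularity (hence $s>2$), while for KP-I the additional time growth of $b$ forces the further jump to $s>3$, and for BBM the smoothing in $\varphi$ lets one trade the derivative against the bilinear estimate of Lemma~\ref{second_iteration} down to $s>3/8$. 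Everything else is a routine, if lengthy, sequence of Cauchy--Schwarz and Sobolev-embedding estimates together with Gr\"onwall.
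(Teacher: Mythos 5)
Your overall strategy (energy estimate on the equation for $c$, Gr\"onwall on the terms linear in $c$, Lemma~\ref{second_iteration} for the source, $L^2$ for KP and $H^1$ for BBM, with the regularity thresholds coming from the Sobolev embeddings needed to control the derivative in $J$) is the same as the paper's. But there is one genuine gap, and it concerns precisely the term you dismiss as ``lower order'': the quadratic-in-$c$ contribution $\varepsilon^3(S(t)c)^2$. In the energy identity this term is
\[
-\varepsilon^3\int c\,S(-t)\partial_{x_1}\bigl((S(t)c)^2\bigr)
=-\varepsilon^3\int (S(t)c)\,\partial_{x_1}\bigl((S(t)c)^2\bigr)
=-\frac{\varepsilon^3}{3}\int \partial_{x_1}\bigl((S(t)c)^3\bigr)=0,
\]
i.e.\ it vanishes \emph{exactly} (and likewise for BBM after pairing in $H^1$, since $(1-\partial_x^2)$ cancels against the $H^1$ inner product). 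Your proposed alternative --- bounding it by ``$\varepsilon^3\|c\|_X^3$-type contributions'' and absorbing it into the exponential --- fails twice over. First, the term contains a derivative of $c$, so it is not controlled by $\|c\|_{L^2}^3$ at all; without the cancellation you would need a bound on $\|\partial_{x_1}S(t)c\|_{L^\infty}$ or similar, which is exactly what is unavailable. Second, even granting a cubic bound, a Riccati-type term $\varepsilon^3\|c\|^2$ in the differential inequality does not ``feed into the exponent'': it requires a bootstrap with an a priori smallness hypothesis on $c$, and any such hypothesis reintroduces a dependence of the admissible time interval on $\|u_0\|_{H^s}$, hence on the probability event. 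The paper's Remark following the lemma makes exactly this point: the classical energy method at higher regularity loses this cancellation and for that reason cannot be used uniformly over the probability space. The exact vanishing of the cubic term is therefore not a technical convenience but the structural ingredient that makes the lemma usable in the probabilistic argument.

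A secondary, related point: for the terms linear in $c$ in the KP cases, the bound $\lesssim\|u_0\|_{H^s}\|c\|_{L^2}^2$ is not obtained by ``distributing'' the derivative via Leibniz and Sobolev embedding --- if $\partial_{x_1}$ falls on $c$ you are stuck in $L^2$. It is obtained from the same skew-symmetry, namely $\int f\,\partial_{x_1}(fg)=\tfrac12\int f^2\,\partial_{x_1}g$ applied with $f=S(t)c$, which moves the whole derivative onto $S(t)u_0$ (resp.\ $S(t)b$) and then costs $\|\partial_{x_1}S(t)u_0\|_{L^\infty}$ (resp.\ $\|\partial_{x_1}S(t)b\|_{L^\infty}$); this is where $H^{s-1}\subset L^\infty$ ($s>2$, KP-II) and $H^{s-2}\subset L^\infty$ together with the $O(|t|)$ growth of $b$ ($s>3$, KP-I) enter. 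You state the right target bounds and thresholds, but the mechanism that produces them is the integration by parts you omitted, not a Sobolev product estimate. Once these two cancellations are put in, the rest of your argument matches the paper's proof.
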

\begin{proof} Calling $E(t) = \frac{1}{2}||c(t)||_{L^2}$ for KP and $E(t) = \frac{1}{2}||c(t)||_{H^1}$ for BBM,
$$E(t) \partial_t E(t) = I + II+ III$$
with
$$
I = -\int c S(-t)\partial_{x_1} \left( 2S(t) u_0 S(t) b + \varepsilon (S(t)b)^2\right) \; ,
$$
$$ II = - \int c S(-t) \partial_{x_1} \left( \varepsilon 2S(t) u_0 S(t) c + \varepsilon^2 2 S(t)b S(t) c\right)$$
and
$$
III = -\varepsilon^3 \int c S(-t) \partial_{x_1} (S(t)c)^2 = -\frac{\varepsilon^3}{3} \int \partial_{x_1} (S(t) c)^3 = 0 \; .
$$
For KP equations, it appears that
$$
I(t) \leq C \|S(t) c \|_{L^2} \|\partial_{x_1}\left( 2S(t) u_0 S(t) b + \varepsilon (S(t)b)^2 \right)\|_{L^2}
$$
and therefore
$$
I(t) \leq C E(t)
\left( \|\partial_{x_1} S(t) u_0\|_{L^2} \|S(t) b \|_{L^\infty} + \|\partial_{x_1} S(t) b\|_{L^2} \|S(t) u_0 \|_{L^\infty} + \|\partial_{x_1} S(t) b\|_{L^2} \|S(t) b \|_{L^\infty}  
\right)
$$
Using that the $H^s$ norms are invariant through the flow $S(t)$, as $s\geq 1$ in both cases, 
$$
\|\partial_{x_1} S(t) u_0\|_{L^2} \leq \|S(t) u_0\|_{H^1} = \| u_0\|_{H^1}\leq \|u_0\|_{H^s}
$$
$$
I(t) \leq C E(t) \left( \|u_0\|_{H^s}\|S(t) b\|_{L^\infty} + \|b\|_{H^1}\|S(t)u_0\|_{L^\infty} + \varepsilon \|b\|_{H^1}\|S(t) b \|_{L^\infty}\right)
$$
and using the fact that $\int f\partial_{x_1}(fg) = \frac{1}{2}\int f^2 \partial_{x_1}g$, for KP we have 
$$
II(t) =- \int (S(t)c)^2 \partial_{x_1} \varepsilon S(t)u_0 - \int (S(t)c)^2 \varepsilon^2 \partial_{x_1} S(t)b
$$
$$
II(t) \leq C \|S(t) c\|_{L^2}^2 \left( \varepsilon \|\partial_{x_1} S(t) u_0\|_{L^\infty} + \varepsilon^2 \|\partial_{x_1} S(t) b\|_{L^\infty} \right)
$$
and thus
$$
II(t) \leq C E(t)^2\left( \varepsilon \|u_0\|_{H^s} + \varepsilon^2 \|\partial_{x_1} S(t) b\|_{L^\infty} \right)\; .
$$
Then, for KP-I, use the fact that for $s>3$ ($s-2>1$), $H^{s-2}$ injects itself in $L^\infty$
$$
\|\partial_{x_1} S(t) b\|_{L^\infty} \leq C \|\partial_{x_1}S(t)b\|_{H^{s-2}} \leq C \|S(t)b\|_{H^{s-1}} \leq C|t| \|u_0\|_{H^s}^2
$$
and thus
$$
\partial_t E(t) \leq C \left( |t| \; \|u_0\|_{H^s}^3 + \varepsilon t^2 \|u_0\|_{H^s}^4\right) + C E(t) \left( \varepsilon \|u_0\|_{H^s} + \varepsilon^2 |t| \; \|u_0\|^2_{H^s}\right)\; .
$$
With a Gronwall lemma,
$$
\|c(t)\|_{L^2}\leq C 
\left( t^2 \|u_0\|_{H^s}^3 + \varepsilon |t|^3 \|u_0\|_{H^s}^4 \right) e^{\varepsilon c|t| \; \|u_0\|_{H^s} + \varepsilon^2 t^2 \|u_0\|_{H^s}^2}\; .
$$
For KP-II, use the fact that for $s>2$, $H^{s-1}\subset L^\infty$,
$$
\|\partial_{x_1} S(t) b\|_{L^\infty} \leq C\|\partial_{x_1}S(t)b\|_{H^{s-1}} \leq C \|S(t)b\|_{H^{s}} \leq C \|u_0\|_{H^s}^2
$$
and thus
$$
\partial_t E(t) \leq C \left( \|u_0\|_{H^s}^3 + \varepsilon  \|u_0\|_{H^s}^4\right) + 
C E(t) \left( \varepsilon \|u_0\|_{H^s} + \varepsilon^2  \|u_0\|^2_{H^s}\right)
$$
and therefore
$$\|c(t)\|_{L^2}\leq C 
\left( |t| \|u_0\|_{H^s}^3 + \varepsilon |t|\; \|u_0\|_{H^s}^4 \right) e^{\varepsilon c|t| \; \|u_0\|_{H^s} + \varepsilon^2 |t|\; \|u_0\|_{H^s}^2}\; .
$$
For BBM, $c$ satisfies : 
$$
2(1-\partial_x^2)\partial_t c = 
 -S(-t) \partial_x \left( 2S(t)u_0 S(t)b + \varepsilon (S(t)b)^2 + \varepsilon 2S(t)u_0 c + \varepsilon^2 2S(t)b S(t)c + \varepsilon^3 (S(t)c)^2\right) \; .
$$
Since $s> \frac{3}{8}$, we can choose $\sigma$ in $]\frac{1}{4},2s-\frac{1}{2}[$ if $s\leq 1$ and $\sigma = s $ otherwise. We have then that :
$$
\partial_t \|c(t)\|_{H^1} \leq 
C \left( \|S(t)u_0S(t)b\|_{L^2} + \varepsilon \|S(t)b\|_{L^4}^2 + \|c(t)\|_{H^1}( \varepsilon \|S(t) u_0\|_{L^2} +\varepsilon^2 \|S(t) b\|_{L^2})\right)
$$
and as $s,\sigma > 1/4$, the Sobolev embeddings $H^s \subset L^4$ and $H^\sigma \subset L^4$ hold,
$$
\|S(t)u_0S(t)b\|_{L^2} \leq \|S(t)u_0\|_{L^4}\|S(t) b \|_{L^4} \leq C \|S(t)u_0\|_{H^s} \|S(t) b\|_{H^\sigma} \leq C \|u_0\|_{H^s}^3
$$
Finally,
$$
\|c(t)\|_{H^1} \leq 
C|t| (\|u_0\|_{H^s}^3 +\varepsilon \|u_0\|_{H^s}^4 ) e^{|t| (\varepsilon \|u_0\|_{H^s} + \varepsilon ^2 \|u_0\|_{H^s}^2)} \; .
$$ 
This completes the proof of Lemma~\ref{energy}.
\end{proof}
\begin{remark}\label{rem}{\rm
One may also establish estimates for higher order derivatives of $c$ by the classical energy method.
This method does not give the cancellation of the term $III$ above and thus the restriction of the time for which the estimate holds depends on $u_0$ and thus on the probability event of which $u_0$ is a representation. 
In particular, it is not clear to us how to exploit in general such an estimate in the context of the study of the decorrelation  of the  Fourier modes of the solution.
Nevertheless, by using random variables $g_n$ with values in a compact set, we should be able to use the energy method with a time of validity that does not depend on the probability event. For instance, one can use $g_n = \chi_n A_n$ where $\chi_n$ is uniformly distributed on $S^1$ and is independent from $A_n$, where $A_n$ is non-negative, compactly supported and $E(A_n^2) = 1$.
}\end{remark}
\section{Probabilistic properties}
In this section $u_0$ is given by \eqref{data}. 
We now want to prove that until time of order $\varepsilon^{-1}$, the maps $a$, $b$ and $c$ are of order $0$ in $\varepsilon$.
For that, we use the following proposition : 
\begin{proposition} \label{kin}
There exist $C,c$ two positive constants such that for all $R>0$, the probability for the initial datum to have a $H^s$ norm bigger than $R$ satisfies : 
$$
P(\|u_0\|_{H^s} \geq R ) \leq C e^{-cR^2}\; .
$$ 
\end{proposition}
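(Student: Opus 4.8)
The plan is to reduce Proposition~\ref{kin} to a now-standard Gaussian tail estimate for a weighted sum of independent sub-exponential random variables and then to run the moment method. By \eqref{data} together with $g_{-n}=\overline{g_n}$ and $\lambda_{-n}=\overline{\lambda_n}$, the relevant quantity is
\[
X^2:=\|u_0\|_{H^s}^2=\sum_{n\in\D^d}|n|^{2s}|\lambda_n|^2|g_n|^2=2\sum_{n\in\D^d_+}|n|^{2s}|\lambda_n|^2|g_n|^2 ,
\]
a sum of independent non-negative random variables whose total weight $M:=2\sum_{n\in\D^d_+}|n|^{2s}|\lambda_n|^2=E(X^2)$ is finite by \eqref{Hs}. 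It suffices to prove $P(X\geq R)\leq C e^{-cR^2}$ for $R$ above a fixed threshold $R_0$, since for $R\leq R_0$ the bound is trivial after enlarging $C$.

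Next I would record the elementary consequence of \eqref{sub-gauss} that, writing $\|Z\|_p:=(E|Z|^p)^{1/p}$, one has $\|g_n\|_p\leq K\sqrt p$ for all $p\geq 1$, with $K$ depending only on the constants $c,C$ there and not on $n$. This follows from the Chernoff bound $P(|\mathrm{Re}\,g_n|\geq t)\leq 2Ce^{-t^2/(4c)}$ (and the same for $\mathrm{Im}\,g_n$), obtained by optimizing $e^{-\gamma t}E(e^{\gamma\mathrm{Re}\,g_n})$ over $\gamma\in\R$, followed by the layer-cake formula for $E(|\mathrm{Re}\,g_n|^p)$ and the triangle inequality $\|g_n\|_p\leq\|\mathrm{Re}\,g_n\|_p+\|\mathrm{Im}\,g_n\|_p$. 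Granting this, for $p\geq 2$ Minkowski's inequality applied to the $L^{p/2}$ norm of the (convergent) series above gives
\[
\|X\|_p^2=\big\|X^2\big\|_{p/2}=\Big\|\sum_{n\in\D^d_+}|n|^{2s}|\lambda_n|^2|g_n|^2\Big\|_{p/2}\leq\sum_{n\in\D^d_+}|n|^{2s}|\lambda_n|^2\,\big\|g_n\big\|_p^2\leq\tfrac12 M K^2 p ,
\]
so that $\|X\|_p\leq A\sqrt p$ with $A:=K\sqrt{M/2}$.

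Finally, I would pass from these moment bounds to the tail estimate by Chebyshev's inequality: for every $p\geq 2$,
\[
P(X\geq R)\leq R^{-p}\|X\|_p^p\leq\Big(\frac{A\sqrt p}{R}\Big)^p .
\]
Choosing $p=R^2/(eA^2)$, which is $\geq 2$ precisely when $R\geq R_0:=\sqrt{2e}\,A$, makes the right-hand side equal to $e^{-R^2/(2eA^2)}$; this is the claimed bound with $c=1/(2eA^2)$, the regime $R<R_0$ being absorbed into $C$. Alternatively one could skip the moment computation and invoke directly a Bernstein-type concentration inequality for the independent, mean-zero, sub-exponential summands $|n|^{2s}|\lambda_n|^2(|g_n|^2-1)$: for $R^2\geq 2M$ the deviation $R^2-M\geq R^2/2$ dominates $E(X^2)=M$, and Bernstein's estimate then yields $\exp(-c'R^2)$ as well. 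I do not expect a genuine obstacle here; the only steps requiring a little care are the uniform bound $\|g_n\|_p\lesssim\sqrt p$ extracted from \eqref{sub-gauss} and the restriction $p\geq 2$ under which the Minkowski step is valid.
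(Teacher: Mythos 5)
Your proof is correct. It follows the same overall skeleton as the paper's argument --- establish the moment growth $\bigl(E\|u_0\|_{H^s}^p\bigr)^{1/p}\leq A\sqrt{p}$, then apply Chebyshev's inequality and optimize over $p\sim R^2$, treating small $R$ by enlarging $C$ --- and the final optimization step is essentially identical (the paper takes $q(R)=e^{-1}R^2/C$ where you take $p=R^2/(eA^2)$). Where you genuinely diverge is in how the moment bound is obtained. The paper first upgrades \eqref{sub-gauss} to a centered bound $E(e^{\gamma\mathrm{Re}(g_n)})\leq e^{c\gamma^2}$ (using $E(g_n)=0$), invokes the Khinchin-type Lemma~3.1 of Burq--Tzvetkov for the linear sums $\sum_n a_n g_n$ after splitting into real and imaginary parts, converts the resulting tail bound into the moment estimate \eqref{otz}, and then exchanges the $L^q_P$ and $H^s_x$ norms by Minkowski. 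You instead work directly with $\|u_0\|_{H^s}^2=\sum_n |n|^{2s}|\lambda_n|^2|g_n|^2$ as a sum of \emph{non-negative} terms, so the triangle inequality in $L^{p/2}$ plus the single-variable bound $\|g_n\|_p\lesssim\sqrt p$ (from a Chernoff argument that tolerates the constant $C$ in \eqref{sub-gauss}) suffices; no cancellation, no independence beyond identical distribution, and no external lemma is needed. This is more elementary and self-contained; what it forgoes is the intermediate estimate \eqref{otz} for general linear combinations $\sum a_n g_n$, which is the standard tool in the random-data literature, though within this paper \eqref{otz} is not used elsewhere, so nothing is lost for the purposes of Proposition~\ref{kin}. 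Two cosmetic points: a factor of $2$ is dropped between your definition of $X^2$ as $2\sum_{n\in\D^d_+}$ and the subsequent display (harmless for the constants), and your parenthetical Bernstein alternative is stated a bit loosely (for sub-exponential summands the large-deviation regime is linear in the deviation, which still gives $e^{-c'R^2}$ at deviation $R^2$, but not for the reason you state); neither affects the main argument.
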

\begin{proof}
We first observe that \eqref{sub-gauss} together with the zero mean value assumption imply that
\begin{equation}\label{sub-gauss-bis}
E(e^{\gamma \textrm{Re} (g_n)})\leq e^{c\gamma^2},\quad E(e^{\gamma \textrm{Im} (g_n)})\leq  e^{c\gamma^2}\,  .
\end{equation}
First, we notice that thanks to \eqref{sub-gauss},  we only need to get \eqref{sub-gauss-bis} for small value of $|\gamma|$, say $|\gamma|\leq 1$
Next, we apply \eqref{sub-gauss} with $\gamma=\pm \alpha$ to get
$$
E(e^{\alpha |\textrm{Re} (g_n)|})+E(e^{\alpha |\textrm{Im} (g_n)|})<\infty.
$$
Now, we use that there exist two positive constants $C_1$ and $C_2$ such that for every $|\gamma|\leq 1$ and every $x\in\R$,
$$
|e^{\gamma x}-1-\gamma x|\leq C_1\gamma^2e^{C_2|x|}\,.
$$
Thanks to  the zero mean value assumption on $g_n$, the above analysis implies that there exists a constant $A$ such that
$$
E(e^{\gamma \textrm{Re} (g_n)})\leq 1+A\gamma^2\leq e^{c\gamma^2},
 $$
provided $c\geq A$. A similar argument applies for the imaginary part of $g_n$.
 Thus, we indeed have \eqref{sub-gauss-bis} and we are in a position to apply \cite[Lemma~3.1]{BT1}.
  \\
 
By separating the real and the imaginary parts, using \cite[Lemma~3.1]{BT1}, we obtain that there exist two positive constants $C$ and $c$ such that 
for every $y\geq 0$ and every sequence $(a_n)$,
$$
P( |\sum_n a_n g_n |\geq y) \leq C e^{-c y^2/ (\sum_n |a_n|^2)}
$$
Indeed, if $a_n = \alpha_n + i \beta_n$ and $g_n = h_n+il_n$, then, 
$$
|\sum_n a_n g_n | \leq |\sum_n \alpha_n h_n| + |\sum_n \alpha_n l_n| +|\sum_n \beta_n h_n|+|\sum_n \beta_n l_n|
$$
and therefore,
\begin{eqnarray*}
P( |\sum_n a_n g_n |\geq y) & \leq & P(|\sum_n \alpha_n h_n|\geq y/4) + P(|\sum_n \alpha_n l_n|\geq y/4) 
\\
& &
+P(|\sum_n \beta_n h_n|\geq y/4)+P(|\sum_n \beta_n l_n|\geq y/4)
\end{eqnarray*}
and then we can apply the \cite[Lemma~3.1]{BT1} on each term of the right hand side. Remark that since the $g_n$ are independent from each other, so are the $h_n$ and the $l_n$, even though $h_n$ is not necessarily independent from $l_n$.

We deduce from that that the $L^q$ norm (in the probability space ) of $\sum a_n g_n$ satisfies : 
$$
\| \sum_n a_n g_n \|_{L^q}^q \leq \Big( C q \sum_n |a_n|^2 \Big)^{q/2} 
$$
with $C$ independent from $a_n$ and $q$.
Indeed, this property is due to a change of variable and an induction on $q$. First, we have that : 
$$
\| \sum_n a_n g_n \|_{L^q}^q = \int q y^{q-1} P(|\sum a_n g_n |\geq y ) dy \leq \int C q y^{q-1}e^{-c y^2/\sum |a_n|^2}dy
$$
With $z = \frac{y}{\sqrt{\sum |a_n|^2}}$,
$$
\| \sum_n a_n g_n \|_{L^q}^q \leq \left( \sum |a_n|^2\right)^{q/2} C(q)
$$
with
$$
C(q) = C \int q z^{q-1}e^{-c z^2}dz\; .
$$
By integration by parts, we get : 
$$
C(q+2) = \frac{q+2}{2c} C(q)
$$
and then using that $C(q)$ is bounded uniformly in $q$ for $q\in [1,3]$, we get
$$
C(q) \leq C \left( \frac{q}{2c} \right)^{q/2}\leq (C q )^{q/2}
$$
and consequently
\begin{equation}\label{otz}
\| \sum_n a_n g_n \|_{L^q}^q \leq \left(Cq \sum |a_n|^2\right)^{q/2}
\end{equation}
Then, we use that : 
$$
P(\|u_0\|_{H^s} \geq R )  = P (\|u_0\|_{H^s}^q \geq R^q) \leq R^{-q} E( \|u_0\|_{H^s}^q) =  R^{-q} \|u_0\|_{L^q_P,H^s_x}^q
$$
where $L^q_P$ denotes the $L^q$ norm in the probability space and $H^s_x$ the $H^s$ norm in the physical space.

For $q\geq 2$ and thanks to Minkowski inequality, 
$$
 \|u_0\|_{L^q_P,H^s_x} =  \|\sum |n|^s\lambda_n g_n e^{inx}\|_{L^q_P,L^2_x}\leq  \|\sum |n|^s\lambda_n g_n e^{inx}\|_{L^2_x, L^q_P}
$$
Hence, using \eqref{otz} and \eqref{Hs}, we get
$$
 \|u_0\|_{L^q_P,H^s_x} \leq \|\Big( C q \sum_n |\lambda_n|^2\; |n|^{2s}\Big)^{1/2}\|_{L^2_x}
 =
 C\sqrt{q} \Big( \sum_n |\lambda_n|^2\; |n|^{2s}\Big)^{1/2}\leq C\sqrt{q}
$$
This in turn implies the bound
$$
P(\|u_0\|_{H^s} \geq R )  \leq \Big(\frac{Cq}{R^2}\Big)^{q/2} \; .
$$
Set $q(R) = e^{-1} \frac{R^2}{C}$ 
such that what inside the parenthesis in the above expression is equal to $e^{-1}$ in the particular case $q=q(R)$.
If $R$ is such that $q(R)\geq 2$ then we have that : 
$$
P(\|u_0\|_{H^s} \geq R )  \leq e^{-q(R)/2} = e^{-cR^2}\; .
$$
Let $R_0$ be defined by $2 = e^{-1} \frac{R_0^2}{C}$, i.e. $R_0=\sqrt{2eC}$. 
For $R\in [0,R_0]$, we can simply write
 $$
 P(\|u_0\|_{H^s} \geq R )  e^{cR^2}\leq e^{cR_0^2}
 $$
Therefore
$$
P(\|u_0\|_{H^s} \geq R )  \leq  e^{cR_0^2}\,e^{-cR^2}\; ,\quad \forall\, R\geq 0.
$$
This completes the proof of Proposition~\ref{kin}.
\end{proof}
As a consequence of  Proposition~\ref{kin}, we get the following exponential integrability statement.
\begin{lemma}\label{probbounds} 
There exists $\delta_0>0$ such that 
$$
E(e^{\delta_0 \|u_0\|_{H^s}^2})<\infty\,.
$$
\end{lemma}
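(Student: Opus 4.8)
The plan is to deduce the claim directly from the Gaussian-type tail estimate of Proposition~\ref{kin} via the standard layer-cake (distribution-function) identity. Write $X=\|u_0\|_{H^s}$, a nonnegative random variable, and recall from Proposition~\ref{kin} that $P(X\geq R)\leq Ce^{-cR^2}$ for every $R\geq 0$, with $C,c>0$ the fixed constants produced there.

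First I would fix any $\delta_0\in(0,c)$, for instance $\delta_0=c/2$. Applying $E(Y)=\int_0^\infty P(Y\geq\lambda)\,d\lambda$ to the nonnegative variable $Y=e^{\delta_0 X^2}\geq 1$, splitting off the trivial contribution of $\lambda\in(0,1)$, and then changing variables $\lambda=e^{\delta_0 r^2}$ (so $d\lambda=2\delta_0 r\,e^{\delta_0 r^2}\,dr$ and $P(e^{\delta_0 X^2}\geq\lambda)=P(X\geq r)$), one gets
$$
E\bigl(e^{\delta_0 X^2}\bigr)=1+\int_{1}^{\infty}P\bigl(e^{\delta_0 X^2}\geq\lambda\bigr)\,d\lambda
=1+\int_{0}^{\infty}2\delta_0 r\,e^{\delta_0 r^2}\,P(X\geq r)\,dr .
$$
Inserting the tail bound of Proposition~\ref{kin} yields
$$
E\bigl(e^{\delta_0\|u_0\|_{H^s}^2}\bigr)\leq 1+2C\delta_0\int_{0}^{\infty}r\,e^{-(c-\delta_0)r^2}\,dr
=1+\frac{C\delta_0}{c-\delta_0}<\infty ,
$$
since $c-\delta_0>0$. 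This is exactly the asserted statement.

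Alternatively, and equivalently, one may bypass Proposition~\ref{kin} and argue straight from the moment estimate \eqref{otz}: expanding $e^{\delta_0 X^2}=\sum_{k\geq 0}\delta_0^k X^{2k}/k!$ and taking expectations, the bound $\|u_0\|_{L^{2k}_P,H^s_x}\leq C\sqrt{2k}$ obtained inside the proof of Proposition~\ref{kin} gives $E(X^{2k})\leq (C'k)^k$; combined with Stirling's lower bound $k!\geq (k/e)^k$, the series is dominated by the geometric series $\sum_k (C'e\delta_0)^k$, which converges as soon as $\delta_0<1/(C'e)$.

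There is essentially no serious obstacle here: the only point needing a little care is that the tail estimate of Proposition~\ref{kin} is valid for \emph{all} $R\geq 0$ (which is precisely what was established, via the separate treatment of $R\in[0,R_0]$), and that $\delta_0$ must be taken strictly below the constant $c$ of that estimate so that the resulting Gaussian integral converges. Any such $\delta_0$ works, and one records the existence of one of them.
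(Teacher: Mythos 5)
Your proof is correct and follows essentially the paper's own route: the paper states Lemma~\ref{probbounds} as an immediate consequence of Proposition~\ref{kin} without spelling out the details, and the layer-cake computation you give (with any $\delta_0$ strictly below the constant $c$ of the tail bound) is precisely the standard deduction being invoked. Your alternative argument via the moment bound \eqref{otz} is also valid, but not needed.
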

We deduce from Lemma~\ref{energy} and Lemma~\ref{probbounds} , 
that as long as $|t|$ is bounded by $\varepsilon^{-1}$ the norm of $c$ can be bounded in probability. Indeed,
\begin{proposition} 
For all $p\geq 1$ there exists $C_p$ such that for all $|t| \leq \frac{1}{C_p \varepsilon}$, we have the bounds : 
$$
E(\|c(t)\|_{L^2}^p)^{1/p} \leq C_p t^2
$$
for KP-I,
$$
E(\|c(t)\|_{L^2}^p)^{1/p} \leq C_p |t|
$$
for KP-II,
$$
E(\|c(t)\|_{H^1}^p)^{1/p}\leq C_p |t|
$$
for BBM.
\end{proposition}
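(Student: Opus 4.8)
The plan is to feed the pathwise bounds of Lemma~\ref{energy} through the exponential integrability of Lemma~\ref{probbounds}. Fix $p\ge 1$; I treat the KP-II case, the other two being identical up to the explicit power of $|t|$ in the final bound. By Lemma~\ref{energy}, for \emph{every} realization of $u_0$,
$$
\|c(t)\|_{L^2}\le C|t|\big(\|u_0\|_{H^s}^3+\varepsilon\|u_0\|_{H^s}^4\big)\,e^{c\varepsilon|t|\,\|u_0\|_{H^s}(1+\varepsilon\|u_0\|_{H^s})}.
$$
Since $\varepsilon\le1$, the polynomial prefactor satisfies $\big(\|u_0\|_{H^s}^3+\varepsilon\|u_0\|_{H^s}^4\big)^p\le C_p\big(1+\|u_0\|_{H^s}^{4p}\big)$, while on the time window $|t|\le 1/(C_p\varepsilon)$ one has $\varepsilon|t|\le 1/C_p$ and $\varepsilon^2 t^2\le 1/C_p^2$, so that the exponent, after raising to the power $p$, is bounded by $\tfrac{pc}{C_p}\|u_0\|_{H^s}+\tfrac{pc}{C_p^2}\|u_0\|_{H^s}^2$.

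The point is then to choose $C_p$ large enough, in terms of $p$, $c$ and the constant $\delta_0$ of Lemma~\ref{probbounds}, so that the whole random factor is dominated by $e^{\delta_0\|u_0\|_{H^s}^2}$. I would first pick $C_p$ with $pc/C_p^2\le\delta_0/4$, which absorbs the quadratic part of the exponent; then handle the linear part by Young's inequality, $\tfrac{pc}{C_p}x\le\tfrac{\delta_0}{4}x^2+\tfrac{(pc)^2}{\delta_0 C_p^2}$; and finally dominate the polynomial prefactor using that for every $N$ there is a constant with $x^{N}\le C_N e^{(\delta_0/4)x^2}$, applied with $N=4p$. Multiplying these three estimates yields
$$
\big(1+\|u_0\|_{H^s}^{4p}\big)\,e^{p c\varepsilon|t|\,\|u_0\|_{H^s}(1+\varepsilon\|u_0\|_{H^s})}\le C_p\,e^{\delta_0\|u_0\|_{H^s}^2},\qquad |t|\le\tfrac{1}{C_p\varepsilon},
$$
whence, taking expectations in the $p$-th power of the pathwise bound, $E(\|c(t)\|_{L^2}^p)\le C_p|t|^p\,E\big(e^{\delta_0\|u_0\|_{H^s}^2}\big)\le C_p|t|^p$ by Lemma~\ref{probbounds}; the $p$-th root gives the stated estimate. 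For BBM the argument is verbatim with $H^1$ in place of $L^2$. For KP-I the pathwise prefactor is $t^2\|u_0\|_{H^s}^3+\varepsilon|t|^3\|u_0\|_{H^s}^4$; writing $\varepsilon|t|^3=t^2(\varepsilon|t|)\le t^2/C_p$ on the relevant window reduces it to $C t^2\big(1+\|u_0\|_{H^s}^4\big)$, and everything else goes through, producing the bound $C_p t^2$.

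The only delicate point is the order of the quantifiers: $C_p$ is chosen after $p$ but must then be uniform in $t$ and $\varepsilon$ on the window $|t|\le1/(C_p\varepsilon)$. This is precisely what makes the exponential factor in Lemma~\ref{energy} harmless, and it is also the structural reason the statement is confined to times $|t|\lesssim_p\varepsilon^{-1}$; no probabilistic input beyond Lemma~\ref{probbounds} is needed.
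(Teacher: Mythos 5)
Your proposal is correct and follows essentially the same route as the paper: insert the pathwise Gronwall bounds of Lemma~\ref{energy} into the expectation and control the resulting exponential factors via the exponential integrability of $\|u_0\|_{H^s}^2$ from Lemma~\ref{probbounds}, which is exactly why the restriction $|t|\lesssim_p \varepsilon^{-1}$ appears. The only (harmless) cosmetic difference is that you absorb the polynomial prefactor and the linear part of the exponent into a single $e^{\delta_0\|u_0\|_{H^s}^2}$ via Young's inequality, whereas the paper bounds the polynomial moments and the exponential moments separately.
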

\begin{proof}
This comes from the fact that 
$$
E(\|u_0\|_{H^s}^p)
$$
is bounded for all $p$ and that
$$
E(e^{c\varepsilon |t|\|u_0\|_{H^s}})\; , \; E(e^{c\varepsilon^2|t|\|u_0\|_{H^s}^2})\; ,\; E(e^{c\varepsilon^2t^2\|u_0\|_{H^s}^2})
$$
are bounded uniformly in $t$ as long as $t^2c\varepsilon^2$ is less than the $\delta_0$ defined in Lemma \ref{probbounds}.
\end{proof}
We next collect some properties of the random variables $(g_n)$.
\begin{lemma}\label{case}
Under our assumption on $(g_n)$, with the $n_j$ belonging to $\D^d$,
$
E(g_{n_1}g_{n_2})=\delta_{n_1}^{-n_2}$ and $E(g_{n_1}g_{n_2}g_{n_3})=0.
$
Moreover
$
E(g_{n_1}g_{n_2}g_{n_3}g_{n_4})=0,
$
unless $n_1=-n_{j}$ for some $j\in\{2,3,4\}$ and $n_k=-n_l$ for the two indexes $k,l$ in the set $\{1,2,3,4\}/\{1,j\}$.
Moreover $E(g_{n_1}g_{n_2}g_{n_3}g_{n_4})=1$ if $n_1\neq n_k$ and $n_1\neq -n_k$. Finally  
$
E(g_{n_1}g_{n_2}g_{n_3}g_{n_4})=E(|g_{n_1}|^4)
$ if $n_1= n_k$ or $n_1=-n_k$.
\end{lemma}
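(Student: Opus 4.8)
The plan is a direct case analysis resting on three facts supplied by the hypotheses on $(g_n)$: the base variables $(g_n)_{n\in\D^d_+}$ are independent, they are normalised by $E(g_n)=0$ and $E(|g_n|^2)=1$, and $e^{i\theta}g_n$ has the same law as $g_n$ for some $\theta$ with $12\theta\neq 0\;[2\pi]$. \emph{First, I would settle the single–variable moments of degree $\le 4$.} Since every $k\in\{1,2,3,4\}$ divides $12$, the relation $12\theta\neq 0\;[2\pi]$ forces $e^{ik\theta}\neq 1$ for $k=1,2,3,4$ (if $k\theta\equiv 0\;[2\pi]$ then $12\theta=(12/k)\,k\theta\equiv 0\;[2\pi]$). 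Applying the identity in law to the monomial $g_n^{\,p}\overline{g_n}^{\,q}$ gives $E\big(g_n^{\,p}\overline{g_n}^{\,q}\big)=e^{i(p-q)\theta}E\big(g_n^{\,p}\overline{g_n}^{\,q}\big)$, so $E\big(g_n^{\,p}\overline{g_n}^{\,q}\big)=0$ whenever $0<|p-q|\le 4$. In particular $E(g_n)=E(g_n^2)=E(g_n^3)=E(g_n^4)=0$ and $E(g_n^2\overline{g_n})=E(g_n^3\overline{g_n})=0$, whereas the only non-zero ``balanced'' moments of total degree at most $4$ are $E(|g_n|^2)=1$ and $E(|g_n|^4)$.

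\emph{Next, a reduction via the pairing relation.} For $m\in\D^d$ put $[m]=\{m,-m\}$, a genuine two-element set since $m_1\neq 0$; the variable $g_m$ is a function of the single base variable attached to the unique element of $[m]\cap\D^d_+$ (it is $g_m$ itself if $m\in\D^d_+$, and $\overline{g_{-m}}$ otherwise). Hence $g_m$ and $g_{m'}$ are independent when $[m]\neq[m']$, while if $[m]=[m']$ their product is a monomial $g_n^{\,p}\overline{g_n}^{\,q}$ in one base variable. Given $n_1,\dots,n_j\in\D^d$, I would partition $\{1,\dots,j\}$ according to the value of $[n_i]$; by independence $E\big(\prod_i g_{n_i}\big)$ factors over the classes into a product of single–variable monomial moments, and if some class is a singleton the corresponding factor is $E(g_{n_i})$ or $E(\overline{g_{n_i}})$, hence $0$, killing the whole expectation.

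\emph{Now the three identities.} For $E(g_{n_1}g_{n_2})$: either $[n_1]\neq[n_2]$, giving $0$; or $n_2=n_1$, giving $E(g_{n_1}^2)=0$; or $n_2=-n_1$, giving $E(|g_{n_1}|^2)=1$. As $n_1\neq -n_1$ these cases are exclusive, so $E(g_{n_1}g_{n_2})=\delta_{n_1}^{-n_2}$. For $E(g_{n_1}g_{n_2}g_{n_3})$: a singleton class forces $0$, so all three indices lie in one class $[n_1]$ and the product is $g_{n_1}^{\,p}\overline{g_{n_1}}^{\,q}$ with $p+q=3$; then $p\neq q$ and the expectation is $0$ by Step~1. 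For $E(g_{n_1}g_{n_2}g_{n_3}g_{n_4})$: a singleton class again forces $0$, so every class has at least two elements, leaving two possibilities. If there are two classes of two elements, $[n_a]=[n_b]\neq[n_c]=[n_d]$, then the expectation is $E(g_{n_a}g_{n_b})E(g_{n_c}g_{n_d})$, which by the first identity is non-zero only when $n_b=-n_a$ and $n_d=-n_c$, and then equals $1$; this is exactly the configuration ``$n_1=-n_j$ for some $j$ and $n_k=-n_l$'' with $n_1\notin\{n_k,-n_k\}$. If instead all four indices lie in $[n_1]$, the product is $g_{n_1}^{\,p}\overline{g_{n_1}}^{\,q}$ with $p+q=4$: it vanishes when $p\neq q$, and when $p=q=2$ --- i.e. exactly two of the $n_i$ equal $n_1$ and the other two equal $-n_1$, which again reads ``$n_1=-n_j$ and $n_k=-n_l$'' but with $n_1\in\{n_k,-n_k\}$ --- it equals $E(|g_{n_1}|^4)$. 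Collecting these cases yields the stated identities.

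\emph{Expected main difficulty.} There is no genuine obstacle here; the argument is essentially bookkeeping. The two points needing a little attention are the extraction of $E(g_n^k)=0$ for $k=2,3,4$ from the stated phase invariance --- handled above by the observation that each such $k$ divides $12$ --- and the matching of the ``two classes of size two'' versus ``one class of size four'' dichotomy in the fourth moment to the precise index description in the statement, in particular deciding when the value is $1$ and when it is $E(|g_{n_1}|^4)$.
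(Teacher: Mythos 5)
Your proof is correct and follows exactly the route the paper indicates for this lemma: independence of the base variables $(g_n)_{n\in\D^d_+}$ together with a case-by-case study of the pairings, using the rotational symmetry of the law to kill all unbalanced monomial moments (the paper records precisely the identities $E(g_n^3)=E(|g_n|^2g_n)=0$ and $E(g_n^4)=E(|g_n|^2g_n^2)=E(|g_n|^2\overline{g_n}^2)=E(\overline{g_n}^4)=0$ that your Step~1 derives from $12\theta\neq 0\;[2\pi]$). The paper leaves the bookkeeping to the reader; your write-up simply supplies the details, including the divisibility observation that justifies $e^{ik\theta}\neq 1$ for $k\le 4$.
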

The proof of this lemma follows by using the independence assumption via a 
careful case by case study. In particular, we use that under our assumption of symmetry of the distribution we have that
$$
E(g_n^3)=E(|g_n|^2g_n) =0\; , \; E(g_n^4)=E(|g_n|^2g_n^2) = E(|g_n|^2\overline{g_n}^2) = E(\overline{g_n}^4) = 0\; .
$$
\section{Expansion of the covariances}
In this section, we complete the proof of Theorem~\ref{result}.
Using the fact that $a$, $b$, and $c$ are of order $0$ in $\varepsilon$ as long as $t \lesssim \varepsilon^{-1}$ 
we would like to develop the covariances of the amplitudes of the different wavelengths.
Let $d_n^m(t)$ be defined as
$$
d_n^m(t) = E(\overline{v_m}(t) v_n(t) )\; .
$$
Then we have the following statement.
\begin{proposition}\label{posledno}
We have that
$$
\partial_t d_n^m(t) = \delta_n^m\varepsilon^2 \partial_t G_n(\lambda,t) + \varepsilon^3 r(\varepsilon;t,m,n),
$$
where $t\,r(\varepsilon;t,m,n)$ satisfies the bounds for $R(\varepsilon;t,m,n)$ announced in the statement of Theorem~\ref{result}.
\end{proposition}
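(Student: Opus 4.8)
The plan is to differentiate $d_n^m(t)=E(\overline{v_m}(t)v_n(t))$ in time, use the equation $\partial_t v_n=-\varepsilon i\varphi(n)\sum_{k+l=n}e^{i\Delta_n^{k,l}t}v_k(t)v_l(t)$, and then insert the expansion $v_n=a_n+\varepsilon b_n+\varepsilon^2 c_n$, organizing the outcome by powers of $\varepsilon$. Differentiating gives
$$
\partial_t d_n^m=-\varepsilon i\varphi(n)\sum_{k+l=n}e^{i\Delta_n^{k,l}t}E(\overline{v_m}v_kv_l)+\varepsilon i\varphi(m)\sum_{k+l=m}e^{-i\Delta_m^{k,l}t}\,\overline{E(\overline{v_n}v_kv_l)}\,.
$$
In the $\varepsilon$-expansion of $E(\overline{v_m}v_kv_l)$ the order-$\varepsilon^0$ term $E(\overline{a_m}a_ka_l)$ vanishes, being the expectation of an odd number of the $g$'s (equivalently $E(g_{n_1}g_{n_2}g_{n_3})=0$), and the order-$\varepsilon^1$ terms, in which exactly one factor is a $b$, are expectations of four $g$'s whose indices add up to $n-m$, hence vanish unless $m=n$ by Lemma~\ref{case}. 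This accounts for the factor $\delta_n^m$ in front of the $\varepsilon^2$ term.

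To identify that $\varepsilon^2$ term I would take $m=n$, substitute $b_n(t)=-i\varphi(n)\sum_{k+l=n}a_ka_lF_n^{k,l}(t)$, and carry out the $g$-moment bookkeeping of Lemma~\ref{case}: the two pairings $(p,q)=(k,l)$ and $(p,q)=(l,k)$ of the four $g$-indices produce the generic contribution, while the degenerate configurations $k=l=n/2$ and $\{k,l\}=\{2n,-n\}$ produce the terms weighted by $E(|g_n|^4)-2$. Using $\Delta_k^{n,-l}=\Delta_l^{n,-k}=-\Delta_n^{k,l}$ (so that $F_k^{n,-l}(t)=F_l^{n,-k}(t)=\overline{F_n^{k,l}(t)}$) and the identities $e^{i\Delta_n^{k,l}t}\overline{F_n^{k,l}(t)}=F_n^{k,l}(t)=-\overline{F_n^{k,l}(-t)}$, together with the symmetrization in $m\leftrightarrow n$ coming from the two sums above, one checks that the $\varepsilon^2$ coefficient equals exactly $\partial_t G_n(\lambda,t)$ as stated in Theorem~\ref{result}.

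Everything else is $\varepsilon^3 r$. First, every remaining contribution that is the expectation of an odd number of $g$'s vanishes (for instance $\overline{b_m}b_ka_l$ and $\overline{b_m}a_kb_l$), so $r$ is a finite sum of terms of two kinds: polynomial ones in the $g$'s built only from $a$ and $b$, and ones carrying at least one Fourier coefficient $c_k(t)$ of $c$. The polynomial ones I would bound using the deterministic estimates of Lemma~\ref{second_iteration} (and its weaker KP-I analogue) together with the moment bounds $E(\|u_0\|_{H^s}^p)<\infty$ coming from Proposition~\ref{kin}. For the terms with a $c$-factor I would keep the convolution sum $\sum_{k+l=n}e^{i\Delta_n^{k,l}t}(\,\cdots)$, bound the exponentials by $1$ (retaining the no-resonance bound $|F_n^{k,l}(t)|\le 2/|\Delta_n^{k,l}|$ only inside the $b$-factors, where it was already needed, in the BBM and KP-II cases, and $|F_n^{k,l}(t)|\le|t|$ for KP-I), apply a Cauchy--Schwarz over the pairs $k+l=n$ against $\sum_k|\lambda_k|^2<\infty$ to replace a coefficient $c_l(t)$ inside the sum by $\|c(t)\|_{L^2}$ (or $\|c(t)\|_{H^1}$ for BBM), use the trivial bound $|c_m(t)|\le C\|c(t)\|_{L^2}$ for the coefficient outside the sum, and finally a H\"older inequality in $\omega$ with the moment bounds $E(\|c(t)\|_X^p)^{1/p}\le C_p|t|$ (resp. $C_pt^2$ for KP-I) established just before. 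The $\varphi$-prefactors then produce the growth $\max(|n|,|m|)$ for the KP equations (because $\varphi(n)=n_1$) and the decay $\min(|n|,|m|)^{-1}$ for BBM (because $\varphi(n)=n/(1+n^2)$), while the powers of $|t|$ are dictated by the resonance structure exactly as in Lemma~\ref{energy}; integrating in time yields $R(\varepsilon;t,m,n)=\int_0^t r(\varepsilon;\tau,m,n)\,d\tau$ and the stated bounds.

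The main obstacle is precisely this last estimate on the $c$-terms: $c$ is controlled only in the low-regularity norm $L^2$ (or $H^1$ for BBM) and only in probability on the range $|t|\le 1/(C\varepsilon)$, so one has to arrange the Cauchy--Schwarz summation over $k+l=n$, the resonance factors and the $\varphi$-weight so as to recover the \emph{sharp} powers of $\max(|n|,|m|)$, of $\min(|n|,|m|)$ and of $|t|$ demanded by Theorem~\ref{result}. Once this is done for each of the finitely many remainder terms, Proposition~\ref{posledno} follows.
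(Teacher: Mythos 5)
Your plan follows the paper's proof essentially step for step: differentiate $d_n^m$, insert $v=a+\varepsilon b+\varepsilon^2 c$, kill the order-$\varepsilon$ term by $E(g_{n_1}g_{n_2}g_{n_3})=0$, extract the factor $\delta_n^m$ and the explicit $\varepsilon^2$ coefficient from the fourth-moment pairings of Lemma~\ref{case} (your two families of pairings are exactly the paper's sums $V_n^m$ and $W_n^m$, including the degenerate configurations weighted by $E(|g_n|^4)-2$), and control the orders $\varepsilon^3$ through $\varepsilon^7$ by combining the deterministic bounds on $b$ from Lemma~\ref{second_iteration} with the probabilistic $L^2$ (resp.\ $H^1$) moment bounds on $c$, Cauchy--Schwarz on the convolution sums, and the $\varphi$-prefactors to produce $\max(|n|,|m|)$ for KP and $\min(|n|,|m|)^{-1}$ for BBM. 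This is the paper's argument.

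The one step that does not survive scrutiny is the claim that \emph{every} remaining contribution with an odd number of $g$'s vanishes, which you use to discard the $\overline{b}\,b\,a$-type terms. The symmetry hypothesis is only invariance of the law of $g_n$ under multiplication by a single $e^{i\theta}$ with $12\theta\neq 0\ [2\pi]$; this forces $E(g_n^{a}\overline{g_n}^{\,b})=0$ whenever $|a-b|$ is a nonzero divisor of $12$, which is all Lemma~\ref{case} needs, but it does not force fifth moments to vanish: for $\theta=2\pi/5$ one has $12\theta\neq 0\ [2\pi]$ while $E(g_p^5)$ may be nonzero, and the configuration $j_1=q_1=j_2=q_2=-m=p$ inside $E(b_k b_l\overline{a_m})$ (i.e.\ $k=l=2p$, $n=4p$, $m=-p$) genuinely produces $E(g_p^5)$. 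The repair is exactly what the paper does: do not discard these terms but bound them like the others (two $b$'s and one $a$, hence order $0$ in time for BBM and KP-II and order $t^2$ for KP-I, times the usual $\varphi$-weight); since they carry $\varepsilon^3$ they land harmlessly in $r$. With that correction your argument coincides with the paper's.
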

\begin{proof}
Let us compute the time derivative of $d_n^m(t)$ .
Since $v_n$ satisfies
\begin{equation*}
\partial_t v_n(t) = -i\varepsilon \varphi(n) \sum_{k+l=n} v_k v_l e^{i\Delta_n^{k,l}t}\; ,
\end{equation*}
it comes
\begin{equation}\label{expand}
\partial_t d_n^m (t) = i\varepsilon \varphi(m) 
E\Big( \sum_{k+l= m}\overline v_k \overline v_l v_n e^{-i\Delta_m^{k,l}t}\Big) -
i\varepsilon\varphi(n) E\Big( \sum_{k+l=n} v_k v_l \overline v_m e^{i\Delta_n^{k,l}t} \Big)\; .
\end{equation}
In the cases of the KP equations, the term of last order, that is the term of order $7$ in $\varepsilon$ will involve three occurrences of $c$, 
and since we only have a bound for $c$  in $L^2$, we will not be able to bound 
$$
\varphi(n)E\Big( \sum_{k+l=n} c_k c_l \overline c_m e^{i\Delta_n^{k,l}t} \Big)
$$
by some function depending on the time and not on $n,m$ (see Remark~\ref{rem})
\\

By inserting 
$
v_n=v_n(\varepsilon) = a_n + \varepsilon b_n + \varepsilon^2 c_n(\varepsilon)
$
in \eqref{expand}
we distinguish different cases according to the power of $\varepsilon$.

First, it is clear that the term of order $0$ in the expression of $\partial_t d_n^m$ is $0$.

Then the term of order $1$ is also $0$ since it involves three occurrences of $a$ : $a_k a_l a_n$, and $a_k = \lambda_k g_k$.
Thus, we can apply Lemma~\ref{case}. This cancellation is frequently used in the Physics literature on the subject.

We will describe the term of order 2 later.

The term of order 3 involves combinations of 1 $c$ and 2 $a$ or 2 $b$ and 1 $a$. Hence, in the KP-I case it is less than $C\max(|n|,|m|) t^2$. 
In the case of KP-II, because of the different estimate on $b$, it is less than $C\max(|n|,|m|) |t|$. 
A similar analysis applies in the BBM case to get the bound $C(\min (|m|,|n|)^{-1}|t|$.

Let us describe this bound in the particular case of combinations between 1 $c$ and 2 $a$, the other ones resulting from similar computations. If we replace one occurrence of $v$ by $c$ and two occurrences of $v$ by $a$ in the expression
$$
i \varphi(m) 
E\Big( \sum_{k+l= m}\overline v_k \overline v_l v_n e^{-i\Delta_m^{k,l}t}\Big) -
i\varphi(n) E\Big( \sum_{k+l=n} v_k v_l \overline v_m e^{i\Delta_n^{k,l}t} \Big)
$$
we get
\begin{eqnarray*}
i \varphi(m) 
E\Big( \sum_{k+l= m}\left(\overline a_k \overline a_l c_n + \overline a_k \overline c_l a_n+ \overline c_k \overline a_l a_n\right)e^{-i\Delta_m^{k,l}t}\Big) - \\
i\varphi(n) E\Big( \sum_{k+l=n} \left( a_k a_l \overline c_m +a_k c_l \overline a_m+c_k a_l \overline a_m\right) e^{i\Delta_n^{k,l}t} \Big)\; .
\end{eqnarray*}
For KP, we can bound the $L^2$ norm of $c$, hence, since by a Cauchy-Schwartz inequality on the $a$ : 
$$
\Big|\sum_{k+l= m}\overline a_k \overline a_l c_ne^{-i\Delta_m^{k,l}t}\Big| \leq |c_n| \; \|u_0\|_{L^2}^2 \leq \|c\|_{L^2}\|u_0\|_{L^2}^2
$$
and by taking its expectation and circular arguments for the other terms, we get the bound on this term: 
$$
3 \left( |\varphi(n)|+|\varphi(m)|\right)E(\|c\|_{L^2}^3)^{1/3}E(\|u_0\|_{H^1}^3)^{2/3} \; .
$$
For KP-I, $E(\|c\|_{L^2}^3)^{1/3}$ is bounded by $Ct^2$ and for KP-II it is bounded by $C|t|$. Hence, as $\varphi(n) = n_1$, this term is bounded by $C\max(|n|,|m|) t^2$ for KP-I and $C\max(|n|,|m|) |t|$ for KP-II.
For BBM, $E(\|c\|_{L^2}^3)^{1/3}\leq E(\|c\|_{H^1}^3)^{1/3}$ is bounded by $C(1+|t|)$. Hence, as $|\varphi(n)| \leq  |n|^{-1}$, this term is bounded by $C(\min (|m|,|n|)^{-1}|t|$.

The third order in $\varepsilon$ also involves combinations of 2 $b$ and 1 $a$. In this case, the order in time for KP-II and BBM is $0$. Hence, the term of third order is bounded by $C\max(|n|,|m|) t^2$ for KP-I, $C\max(|n|,|m|)(1+ |t|)$ for KP-II and $C\min(|n|,|m|)^{-1}(1+|t|)$ for BBM.

The term of order 4 involves combinations of 1 $c$, 1 $b$ and 1 $a$ or 3 $b$. Hence, in the KP-I case it is less than $C\max(|n|,|m|)|t|^3$. 
A similar analysis applies in the KP-II and BBM cases.

The term of order 5 involves combinations of 1 $a$ and 2 $c$ or 2 $b$ and 1 $c$. Hence, in the KP-I case it is less than $C\max(|n|,|m|)|t|^4$.
Again a similar analysis applies in the KP-II and BBM cases.

The term of order 6 involves combinations of 1 $b$ and 2 $c$. Hence, in the KP-I case it is less than $C\max(|n|,|m|)|t|^5$ and a similar analysis applies 
in the KP-II and BBM cases. 
 
Finally the term of order 7 involves combinations of 3 $c$. Hence, it is less than $C\max(|n|,|m|)|t|^6$ in the KP-I case and $C\max(|n|,|m|)|t|^3$ in
the KP-II case. 

Since $t$ is less than $\varepsilon^{-1}$, we have that all estimates in the KP-I case are $O( \max(|n|,|m|) \varepsilon^3 t^2)$.
In the KP-II case they are $O( \max(|n|,|m|) \varepsilon^3 (1+|t|))$ and in the BBM case $O( (\min(|m|,|n|)^{-1} \varepsilon^3(1+ |t|))$.

Let us compute the term of order 2. As it involves 2 $a$ and 1 $b$, two sums of different nature (and their complex conjugate when inverting $n$ and $m$) appear in it : 
$$
V_n^m (t) = i \varphi(m) E(\Big( \sum_{k+l= m}\overline a_k \overline a_l b_n e^{-i\Delta_m^{k,l}t}\Big)
$$
and
$$W_n^m(t) = i \varphi(m) E(\Big( \sum_{k+l= m}\overline b_k \overline a_l a_n e^{-i\Delta_m^{k,l}t}\Big)
$$
which appears twice because of the symmetry between $k$ and $l$. The term of order 2 is therefore equal to
$$
V_n^m(t)+\overline V_m^n(t) +2( W_n^m(t)+\overline W_m^n(t))\; .
$$

By replacing $b_n(t)$ by its value 
$$b_n (t) = -i\varphi(n) \sum_{j+q = n} a_ja_q F_n^{j,q}(t)$$
we get
$$V_n^m(t) = \varphi(n) \varphi(m) \sum_{k+l=m}\sum_{j+q=n} E(\overline a_k \overline a_l a_j a_q) e^{-i\Delta_m^{k,l}t}F_n^{j,q}(t)\; .$$
We now recall that thanks to Lemma~\ref{case}, $E(\overline a_k \overline a_l a_j a_q)$ is equal to zero unless we can pair the indexes.
We can not pair $k$ with $l$ or we will have $k=l$, $m=0$, and $m\neq 0$ since $m_1\neq 0$ by hypothesis, but we can pair $k$ with $j$ and $l$ with $q$ or $k$ with $q$ and $l$ with $j$. In both case we have $n=m$. As long as $k\neq l$, we have : 
$$
E(\overline a_k \overline a_l a_j a_q) = |\lambda_k|^2 |\lambda_l|^2
$$
otherwise $2k=n$ and
$$
E(\overline a_k \overline a_l a_j a_q)= \delta_n^{2k}|\lambda_k|^4 E(|g_k|^4)\; .$$

Using our assumptions on the random variables and that $
e^{-i\Delta_n^{k,l}t}F_n^{k,l}(t)=-F_n^{k,l}(-t)
$
we get 
$$V_n^m(t)  = 2\delta_n^m \varphi^2(n) \sum_{k+l=n} |\lambda_k|^2 |\lambda_l|^2 (-F_n^{k,l}(-t))
+(E(|g_n|^4)-2)\delta_{n}^m\varphi^2(n)\delta_{n}^{2q}(-F^{q,q}_{n}(-t))|\lambda_q|^4\; .$$
Next,
$$W_n^m (t)= - \varphi(m) \sum_{k+l=m}\sum_{j+q=k} \varphi(k) E(\overline a_j \overline a_q \overline a_l a_n) e^{-i\Delta_m^{k,l}t} \overline {F_k^{j,q}(t)} \; .$$
Here, we can pair $j$ with $l$ and $q$ with $n$ or $j$ with $n$ and $q$ with $l$ but not $j$ with $q$. 
In both case, we can do the computation (with changing the indexes): 
$$
n = q = k-j = k+l= m
$$ 
As long as $l\neq n$, we get: 
$$
E(\overline a_k \overline a_l a_j a_q)= |\lambda_n |^2 |\lambda_l|^2
$$
but otherwise 
$$
E(\overline a_k \overline a_l a_j a_q) = |\lambda_n|^4 E(|g_n|^4)\; .
$$
Again, using our assumptions on the random variables, we get
\begin{eqnarray*}
W_n^m (t) & = & -2\delta_n^m  \varphi(n) \sum_{k+l=n}  \varphi(k) |\lambda_n|^2 |\lambda_l|^2 e^{-i\Delta_n^{k,l}t} \overline {F_k^{n,-l}(t)} 
\\
& &
+
(E(|g_n|^4)-2)\delta_{n}^m\varphi(2n)\varphi(n)
e^{-i\Delta_{n}^{2n,-n}t} 
\overline{F^{n,n}_{2n}(t)}
|\lambda_n|^4
\end{eqnarray*}
and since
$$
F_k^{n,-l}(t)=\overline {F_n^{k,l}(t)},
$$
we arrive at
\begin{eqnarray*}
W_n^m(t) & = & -2\delta_n^m \varphi(n) \sum_{k+l=n}  \varphi(k) |\lambda_n|^2 |\lambda_l|^2 e^{-i\Delta_n^{k,l}t} (-F_n^{k,l}(-t))
\\
& &
+(E(|g_n|^4)-2)\delta_{n}^m\varphi(2n)\varphi(n)
(-F^{2n,-n}_{n}(-t))|\lambda_n|^4\,\,.
\end{eqnarray*}
Combining the previous formulae implies the claimed expression for the second order.
This completes the proof of Proposition~\ref{posledno}.
\end{proof}
Observe that
$$
d_n^m(t)=
e^{-it(\omega(n)-\omega(m))}\, E(\overline{u_m}(t) u_n(t) )
$$
and therefore
$$
E(\overline{u_m}(t) u_n(t) )-E(\overline{u_m}(0) u_n(0) )
=
e^{it(\omega(n)-\omega(m))}d^m_n(t)-d^m_n(0).
$$
If $m=n$, it suffices to employ the fundamental theorem of calculus to the function $d_n^m$ and to apply Proposition~\ref{posledno}.
If $m\neq n$ then one has that $d^m_n(0)=0$ and hence one may write 
$$
e^{it(\omega(n)-\omega(m))}d^m_n(t)-d^m_n(0)
=
e^{it(\omega(n)-\omega(m))}(d^m_n(t)-d^m_n(0))
$$
and apply again the fundamental theorem of calculus in combination with Proposition~\ref{posledno}.
\\

Let us now bound $G_n(\lambda,t)$.
For KP-I, we use the fact that $\varphi(n) = n_1$ and $|F_n^{k,l}(t)| \leq |t|$. Then, as the term involving $E(|g_n|^4)$ is included in the sum,
$$
|\partial_t G_n(t)| \leq C |t| 
\Big( |n|^2 \sum_{k+l=n} |\lambda_k|^2 |\lambda_l|^2 + |n| |\lambda_n|^2 \sum_{k+l = n}|k|\; |\lambda_l|^2\Big),
$$
$$
\sum_{k+l = n}|k|\; |\lambda_l|^2 \leq (|n| \sum_l |\lambda_l|^2 + \sum_l |l| |\lambda_l|^2 ) \leq C|n| \; \|u_0\|_{H^s}^2\leq C |n|,
$$
$$
|n|^{2s}\sum_{k+l=n} |\lambda_k|^2 |\lambda_l|^2 \leq 2 \sum_{k+l=n} |k|^{2s}|\lambda_k|^2 |\lambda_l|^2 \leq 2 \max (|\lambda_l|^2) \|u_0\|^2_{H^s}\leq C
$$
and since $|\lambda_n|^2 |n|^{2s}$ is bounded,
$$
|\partial_t G_n(t)| \leq C |t|\; |n|^{2-2s},\quad| G_n(t)| \leq C |t|^2 |n|^{2-2s}\; .
$$

For KP-II and BBM, we start by integrating $\partial_t G_n$ before bounding it. We recall that initially $G_n$ is null and that $F_n^{k,l}(t) = \int_{0}^t e^{i\Delta_n^{k,l}\tau} d\tau$. Therefore, we can write 
\begin{eqnarray*}
G_n(t)&  = & 4\varphi(n)  \sum_{k+l=n} \widetilde F_n^{k,l}(t) \left( \varphi(n) |\lambda_k|^2 |\lambda_l|^2 - \varphi(k) |\lambda_l|^2 |\lambda_n|^2 - \varphi(l) |\lambda_n|^2 |\lambda_k|^2\right) \\
 & & + (E(|g_n|^4) - 2) \varphi(n)\left( 2\delta_{2q}^n \widetilde F_n^{q,q}(t) \varphi(n) |\lambda_q|^4 - 4 \widetilde F_n^{2n,n}(t) \varphi(2n) |\lambda_n|^4 \right)
\end{eqnarray*}
with 
$$
\widetilde F_n^{k,l} (t) = -\int_0^t \textrm{Re}(F_n^{k,l}(-\tau)) d\tau = \frac{1- \cos(\Delta_n^{k,l}t)}{(\Delta_n^{k,l})^2} \; .
$$
We notice that $\varphi(n) |\lambda_q|^4$ is of the form $\varphi(n) |\lambda_k|^2 |\lambda_l|^2$ and that $\varphi(2n) |\lambda_n|^4$ is of the form $\varphi(l) |\lambda_n|^2 |\lambda_k|^2$ with $k+l=n$ to produce the bound
$$
|G_n(t)| \lesssim |\varphi(n)| \sum_{k+l=n} |\widetilde F_n^{k,l}(t)|\left( | \varphi(n)|\; |\lambda_k|^2 |\lambda_l|^2 + |\varphi(k)|\; |\lambda_l|^2 |\lambda_n|^2 +| \varphi(l)|\; |\lambda_n|^2 |\lambda_k|^2\right) \; ,
$$
and then we get a uniform in time bound using the form of $\widetilde F_n^{k,l}(t)$
$$
|G_n(t)| \lesssim |\varphi(n)| \sum_{k+l=n} (\Delta_n^{k,l})^{-2 } \left( | \varphi(n)|\; |\lambda_k|^2 |\lambda_l|^2 + |\varphi(k)|\; |\lambda_l|^2 |\lambda_n|^2 +| \varphi(l)|\; |\lambda_n|^2 |\lambda_k|^2\right) \; .
$$

We now focus on the case of KP-II. For this equation, we have seen that $(\Delta_n^{k,l})^{-2}$ is bounded by $(k_1l_1n_1)^{-2}$. We recall that $\varphi(n) = n_1$. Hence, we get
$$
|G_n(t)| \lesssim |n_1| \sum_{k+l=n} (n_1k_1l_1)^{-2 } \left( | n_1|\; |\lambda_k|^2 |\lambda_l|^2 + |k_1|\; |\lambda_l|^2 |\lambda_n|^2 +| l_1|\; |\lambda_n|^2 |\lambda_k|^2\right) \; .
$$
Using symmetries on $k$ and $l$, we bound $G_n(t)$ by two different sums $G_n(t) \lesssim I+II$ with
$$
I = |n_1| \sum_{k+l=n} (n_1k_1l_1)^{-2 }  | n_1|\; |\lambda_k|^2 |\lambda_l|^2 \mbox{ and } II = |n_1| \sum_{k+l=n} (n_1k_1l_1)^{-2 }|k_1|\; |\lambda_l|^2 |\lambda_n|^2 \; .
$$
We multiply $I$ by $|n|^{2s}$, use symmetries and that $|n|^{2s} \leq C (|k|^{2s}+|l|^{2s})$ to get
$$
|n|^{2s} I \lesssim \sum_{k+l=n}  |k|^{2s} |\lambda_k|^2 |\lambda_l|^2 \; .
$$
We get that the sum is bounded uniformly in $n$ since $\sum_k |k|^{2s}|\lambda_k|^2$ is bounded by hypothesis and thus so is $\sup_l |\lambda_l|^2$.

Multiplying $II$ by $|n|^{2s}$ gives
$$
|n|^{2s} II = |n|^{2s} |\lambda_n|^2 \sum_l |n_1|^{-1} |n_1-l_1|^{-1} l_1^{-2} |\lambda_l|^2
$$
and since $|n|^{2s} |\lambda_n|^2$ is bounded, $|n_1|^{-1} |n_1-l_1|^{-1} l_1^{-2} \leq 1$ and $\sum_l |\lambda_l|^2$ is bounded, we get the result. Summing $I$ and $II$ gives the estimate
$$
|G_n(t)| \leq C |n|^{-2s}
$$
where the constant $C$ depends on the initial datum (that is the law of $g_n$ and the sequence $(\lambda_n)_n$) but not on time.

We now focus on the case of BBM. For this equation, we have seen that $\Delta_n^{k,l}$ is equal to $-\frac{kln(3+n^2 - kl)}{(1+n^2)(1+k^2)(1+l^2)}$ and thus $(\Delta_n^{k,l})^{-2}$ is less than $C \frac{n^2 k^2 l^2}{k^4+l^4}$. We recall that $|\varphi (n)| = \frac{|n|}{1+n^2} \leq |n|^{-1}$. Hence, we get that
$$
|G_n(t)|\lesssim |n|^{-1} \sum_{k+l=n} \frac{n^2 k^2 l^2}{k^4+l^4} \left( |n|^{-1} |\lambda_k|^2 |\lambda_l|^2 + |k|^{-1} |\lambda_l|^2 |\lambda_n|^2 +|l|^{-1} |\lambda_n|^2 |\lambda_k|^2\right)\; .
$$
We proceed in the same way as KP-II by dividing the bound in to two sums $I$ and $II$ with 
$$
I =  \sum_{k+l=n} \frac{ |k|^{2-2s} |l|^{2-2s}}{k^4+l^4}   |k|^{2s}|\lambda_k|^2 |l|^{2s}|\lambda_l|^2
$$
and
$$
II = |n|^{2s} |\lambda_n|^2\sum_{k+l=n} \frac{|n|^{1-2s} |k|\; |l|^{2-2s}}{k^4+l^4} |l|^{2s} |\lambda_l|^2 \; .
$$
We multiply $I$ by $|n|^{\beta(s)}$, with $\beta(s) = 4s$ if $s\leq 1$ and $\beta(s) = 2+2s$ otherwise, use that $|n|^{\beta(s)} \leq C (|k|^{\beta(s)}+|l|^{\beta(s)})$ and symmetries to get
$$
|n|^{\beta(s)} I \lesssim \sum_{k+l=n}\frac{|k|^{\beta(s)+2-2s}|l|^{2-2s}}{k^4+l^4} |k|^{2s}|\lambda_k|^2 |l|^{2s}|\lambda_l|^2 \; .
$$
If $s\geq 1$ then $\beta(s) = 2+2s$ and $2-2s \leq 0$ thus $\frac{|k|^{\beta(s)+2-2s}|l|^{2-2s}}{k^4+l^4} \leq \frac{k^4}{k^4+l^4} \leq 1$ and we use the conditions on the initial datum to bound $|n|^{\beta(s)} I$ independently from $n$ and $t$. If $s\leq 1$, $\beta(s) = 4s$ thus $|k|^{\beta(s)+2-2s}|l|^{2-2s} = |k|^{2+2s}|l|^{2-2s} \leq k^4 + l^4$ and thus we get the bound.

Multiplying $II$ by $|n|^{\beta(s)}$ gives
$$
|n|^{\beta(s)} II = |n|^{2s} |\lambda_n|^2\sum_{k+l=n} \frac{n^{1-2s+\beta(s)} |k|\; |l|^{2-2s}}{k^4+l^4} |l|^{2s} |\lambda_l|^2 \; .
$$
Since $\beta(s)+1-2s \geq 0$ we can write $|n|^{\beta(s)} II \lesssim |n|^{2s} |\lambda_n|^2(II.1+II.2)$ with
$$
II.1 = \sum_{k+l=n} \frac{ |k|^{2-2s+\beta(s)} |l|^{2-2s}}{k^4+l^4} |l|^{2s} |\lambda_l|^2 \mbox{ and } II.2 = \sum_{k+l=n} \frac{|k|\; |l|^{3-4s+\beta(s)}}{k^4+l^4} |l|^{2s} |\lambda_l|^2\; .
$$
We have already seen that $|n|^{2s} |\lambda_n|^2$ and $\frac{ |k|^{2-2s+\beta(s)} |l|^{2-2s}}{k^4+l^4}$ are bounded independently from $n$. Then, if $s\leq 1$, $\beta(s) = 4s$ thus $|k|\; |l|^{3-4s+\beta(s)} = |k|\; |l|^3 \leq k^4+l^4$ and otherwise $\beta(s) = 2+2s$ thus $|k|\; |l|^{3-4s+\beta(s)} = |k|\; |l|^{5-2s} \leq |k|\; |l|^3$, which gives the bound. Summing $I$ and $II$ gives the estimate
$$
|G_n(t)| \leq C |n|^{-\beta(s)}
$$
where the constant $C$ depends on the initial datum but not on time.

This completes the proof of Theorem~\ref{result}.
\begin{remark}{\rm
In the case of the KP equations,
formally, if $\lambda_k$ do not depend on $k$  and $g_n$ are standard complex Gaussian variables then
$\partial_t G_n(\lambda, t)=0$. This goes with the fact that the measure induced by
$$\sum_n g_n e^{in x} $$
should be formally invariant through the flow of KP, as it is a renormalization of the formal object  
$$e^{- ||u||_{L^2}^2}du$$
and the $L^2$ norm of the solution of KP does not depend on time. 
However the support of these measures in the case of the KP equation contains functions which are too singular for the available well-posedness theory.
In the case of the KdV equation the measures is supported by $H^{-1/2-}$.  This could be a motivation to try to lower the regularity assumption
in our approach in the KdV case. }
\end{remark}

\end{document}